\newcommand{\nn}{\nonumber}                             %
\newcommand{\Ind}{1\!\mathrm{l}}                          %
\newtheorem{Theorem}{Theorem}
\newtheorem{Lemma}{Lemma}
\begin{document}
\title{Asymptotic Properties for Methods Combining Minimum Hellinger     Distance Estimates and Bayesian Nonparametric Density Estimates }

\author{ Yuefeng Wu\footnote{University of Missouri Saint Louis; wuyuel@umsl.edu} and Giles Hooker\footnote{Cornell University; gjh27@cornell.edu}}

\date{}

\maketitle

\abstract{In frequentist inference, minimizing the Hellinger distance between a kernel density estimate and a parametric family produces estimators that are both robust to outliers and statistically efficient when the parametric family is contains the data-generating distribution. This paper seeks to extend these results to the use of nonparametric Bayesian density estimators within disparity methods. We propose two estimators: one replaces the kernel density estimator with the expected posterior density using a random histogram prior;  the other transforms the posterior over densities into a posterior over parameters through minimizing the Hellinger distance for each density.   We show that it is possible to adapt the mathematical machinery of efficient influence functions from semiparametric models to demonstrate that both our estimators are efficient in the sense of achieving the Cram\'{e}r-Rao lower bound. We further demonstrate a Bernstein-von-Mises result for our second estimator, indicating that its posterior is asymptotically Gaussian. In addition, the robustness properties of classical minimum Hellinger distance estimators continue to hold.}

\section{Introduction}
This paper develops Bayesian analogs of minimum Hellinger distance methods. In particular, we aim to produce methods that enable a Bayesian analysis to be both robust to unusual values in the data and to retain their asymptotic precision when a proposed parametric model is correct.

All statistical models include assumptions which may or may not be true of the mechanisms producing a given data set. Robustness is a desired property in which a {statistical} procedure is relatively insensitive to deviations from these assumptions. For frequentist inference, concerns are largely associated with distributional robustness: the shape of  the true underlying distribution deviates slightly from the assumed model. Usually, this deviation represents the situation where there are some outliers in the observed data set{; see \cite{Huber2004} for example}.   For Bayesian procedures, the deviations may come from the model, prior distribution, or utility function, or some combination thereof. Much of the literature on Bayesian robustness has been concerned with the prior distribution or utility function.  By contrast, the focus of this paper is  robustness with respect to outliers in a Bayesian context, a relatively understudied form of robustness for  Bayesian models. For example, we know  that {Bayesian models with heavy tailed data distributions} are robust with respect to outliers for the case of one single location parameter estimated by many observations. { However, as a consequence of the Cr\'{a}mer-Rao lower bound and the efficiency of the MLE, modifying likelihoods to account for outliers will usually result in a loss of precision in parameter estimates when they are not necessary.}  The methods we propose, and the study of their robustness properties, {will provide an alternative means of making any i.i.d. data distribution robust to outliers} { that do not loose efficiency when no outliers are present.} We speculate that they can be extended beyond i.i.d. data as in \cite{Hooker11}, but do not pursue this here.

Suppose we are given the task of estimating $\theta_0 \in \Theta$ from  independent and identically distributed  {univariate random} variables $X_1,\ldots,X_n$, where we assume each $X_i$ has density $f_{\theta_0} \in \mathscr F=\{f_{\theta}:\theta\in\Theta\}$.
%The statistical properties of our proposed methods for accomplishing this will be examined below. %\highlight[yellow]
%{Throughout, convergence results are given with respect to the measure $P_{\theta_0}^{\infty}$,  the distribution of the sequence generated from according to $f_{\theta_0}$.}  %\highlight[green]
%{The properties of the proposed methods  can be generalized to a generating density $g_0 \notin \mathscr F$ in a straightforward manner, but at the cost of further mathematical complexity and we do not pursue this here except for a few results that require no more essential work.}
Within the frequentist literature,  minimum Hellinger distance estimates proceed by first estimating a kernel density $\hat g_n(x)$ and then choosing $\theta$ to minimize the Hellinger distance $h(f_{\theta},g_n)=[\int\{{f_{\theta}^{1/2}(x)} - {\hat g^{1/2}_n(x)}\}^2 dx]^{1/2}$.  The minimum Hellinger distance estimator was shown in \cite{Beran77} to have the remarkable properties of being both robust to outliers and statistically efficient, in the sense of asymptotically attaining the information bound, when the data are generated from $f_{\theta_0}$. These methods have been generalized to a class of minimum disparity estimators, based on alternative measures of the difference between a kernel density estimate and a parametric model, which have been studied since then, eg. \cite{BasuLindsay94,BSV97,PakBasu98,ParkBasu04} and \cite{Lindsay94}. {  While some adaptive M-estimators can be shown to retain both robustness and efficiency, eg. \cite{GerviniYohai02}, minimum disparity methods are the only generic methods we are aware of that retain both properties and can also be readily employed within a Bayesian context.}
In this paper, we {only consider} Hellinger distance in order to simplify the mathematical exposition; the extension to more general disparity methods can be made following similar developments to those in \cite{ParkBasu04} and \cite{BSV97}.

%The majority of interest for Bayesian robustness lies in the prior robustness: an insensitive performance for the prior distribution deviating slightly from the assumed one. Also,  robustness with respect to the utility function, in face of model uncertainty and gamma minimax estimations are some other aspects of Bayesian robustness and has been discussed by a substantial literature.  The proposed robust hierarchical Bayesian estimate unifies Bayesian density estimation and the use of  Hellinger distance. The use of Hellinger distance  for a disparity-based method was introduced by  \cite{Beran77}.

%\footnote{Remove the green part to avoid the misspecified problem. The notation $g$ and $g\notin \mathscr F$ is kept when discussing functional $T$ and nonparametric density estimation and so on, where the less restricted densities are needed. On the other hand, my understanding is that even if we consider generating function $g\notin \mathscr F$, without considering the misspecified issue, there must be some $f_{\theta_n}\to g$, therefore $g$ is in the closure of $\mathscr F$, which is not very  }

%\st{Throughout this paper, suppose that we have univariate i.i.d. observations $X_1,\ldots,X_n$ with %probability density function $f_{\theta_0}$ and the unknown true parameter $\theta_0$. Let $\mathscr %F=\{f_{\theta}:\theta\in\Theta\}$ denote the parametric family, and $\Theta$ denote the space of the parameter %of this family. Our task is now to estimate $\theta$. }

Recent methodology proposed in \cite{Hooker11}, suggested the use of disparity-based methods  within Bayesian inference via the construction of a ``disparity likelihood'' by replacing the likelihood function when calculating the Bayesian posterior distribution; {they} demonstrated that the resulting expected {\em  a posteriori} estimators retain the frequentist properties studied above. These methods first obtain kernel  density estimates from data and then calculate the disparity between the estimated density function and the corresponding density functions in the parametric family.

In this paper, we propose  the use of Bayesian non-parametric methods instead of the classical kernel methods in applying the minimum Hellinger distance method. The method we proposed is just to replace the kernel density estimate used in classical minimum Hellinger distance estimate by the Bayesian nonparametric expected {\em a posteriori} density, which we denote by MHB (Minimum Hellinger distance method using a Bayesian nonparametric density estimate) The second method combines the minimum Hellinger distance estimate  with the Bayesian nonparametric posterior to give a posterior distribution of the parameter of interest. This latter method is our main focus.
%Within this framework, the parametric model modifies the usual prior for the non-parametric density, creating a hierarchical likelihood.
We show that it is more robust than usual Bayesian methods and demonstrate that  it retains asymptotic efficiency, hence the precision of the estimate is maintained. {So far as we are aware,} {this is the first Bayesian method that can be applied generically and retain both robustness and (asymptotic) efficiency.} We denote it by BHM (Bayesian inference using Minimum Hellinger distance).
%\footnote{maybe too strong.}

To study the properties of the proposed new methods, we treat both MHB and BMH as special cases of  semi-parametric models. The general form of a semi-parametric model has a natural parametrization $(\theta, \eta)\mapsto P_{\theta, \eta}$, where $\theta\in \Theta$ is a Euclidean parameter and $\eta\in H$ belongs to an infinite-dimensional set. For such models, $\theta$ is the parameter of primary interest, while $\eta$ is a nuisance parameter. Asymptotic properties of some of Bayesian semi-parametric models have been discussed in \cite{wu09}. Our disparity based methods involve parameters in Euclidean space and  Hilbert space with the former being of most interest. However, unlike many semi-parametric models in which $P_{\theta,\eta}\in \mathcal P$ is specified jointly by $\theta$ and $\eta$, in our case the finite dimensional parameter and the parameter the nonparametric density functions are parallel specifications of the data distribution. Therefore, standard methods to study asymptotic properties of semi-parametric models will not apply to the study of disparity based methods. Nevertheless,  considering the problem of  estimating $\psi(P)$ of some function $\psi: \mathcal P \mapsto \mathbb R^d$, where $\mathcal P$ is the space of the probability models $P$,  semi-parametric models and disparity based methods can be unified into one framework.

The MHB and BMH methods are introduced in detail in Section 2 where we will also discuss some related concepts and results, such as tangent sets, information, consistency and the specific nonparametric prior that we employ.  In Section 3, both MHB and BMH are shown to be efficient, in the sense that asymptotically the variance of the estimate achieves the lower bound of the Cram\'{e}r-Rao theorem. For MHB, we  show that asymptotic normality of the estimate holds, where the asymptotic variance is the inverse of the Fisher information. For BMH, we show that a Bernstein-von Mises (BvM) theorem holds. %The proof is based on calculating the moment generating function of the parameter of interest based on its posterior distribution and showing that it converges to the moment generating function of a normal distribution. The technique used in the proof involves an asymptotic normality expansion and the calculation of an efficient influence function. A general theorem and some simple examples for the BvM for several semi-parametric methods have been given in \cite{castillo2015bernstein}. Lemma \ref{lemma:prop1} in this paper is modified from Proposition 1 in \cite{castillo2015bernstein}, which is only implicitly proved as part of proof for Lemma 4.1 in that paper. We give an detailed and explicit proof of this Lemma.  Our results also show that Lemma \ref{lemma:prop1}, the general result modified from Proposition 1 in \cite{castillo2015bernstein}, can be applied to some complicated semi-parametric models, where the functional of the nonparametric part of the model doesn't necessarily have an explicit form.
The robustness property  and further discussion of these two methods are given in Section 4 and 5 respectively.

%%%%%%%%%%%%%%%%%%%%%%%%%%%%%%%%%%%%%%%%%%

\section{Minimum Hellinger Distance Estimates}
%We begin  examining the asymptotic properties of replacing kernel density estimates with Bayesian  non-parametric density estimates within minimum Hellinger distance estimation.

Assume that %univariate
random variables $X_1,\ldots,X_n$ are independent and identically distributed (iid) with density belonging to a specified parametric family $\mathscr F=\{f_{\theta}: \theta\in \Theta\}$, where all the $f_{\theta}$ in the family have the same support, denoted by $supp(f)$. For simplicity, we use $\mathbb X_n$ to denote the random variables $X_1,\ldots,X_n$.
%To estimate $\theta$ given $\mathbb X_n$, we first introduce a Bayesian non-parametric estimate of the density.
More flexibly, we model $\mathbb X_n\sim g^n$, where $g$ is a probability density function with respect to Lebesgue measure on $supp(f)$. Let $\mathscr G$ denote the collection of all such probability density functions.
% the probability distribution of the  by $\mathscr G$, the collection of all probability density functions with respect to Lebesgue measure on $supp(f)$. %{and define a topology on $\mathscr G$ induced by Hellinger distance}.
If the parametric family contains the data-generating distribution, then $g=f_{\theta}$ for some $\theta$. Formally, we can denote the probability model of the observations in the form of a semi-parametric model $(\theta, g)\mapsto P_{\theta,g}$.
We aim at estimating $\theta$ and consider $g$ as a nuisance parameter,  which is typical of semi-parametric models.

%{\color{red} Note that
%without considering the misclassification situation, $g$ and $\theta$ both specify the same distribution, nevertheless it is not the case in estimation problem practice, since the estimated $g$ as a nonparametric estimation almost never belong to $\mathscr F$ even if the data are truly generated from the parametric family that assumed.}  {\color{blue}(May be removed.)} {\color{green}Nevertheless, in the typical semi-parametric models neither the parametric  part  nor the nonparametric part will completely specify the probability distribution.}

%To implement the idea of minimizing disparity and to study the property of the methods introduced below, we connect the $\theta$ and $g$ in a semi-parametric model form. Let $\mathcal P$ be a set of probability measures on the sample space $\{\mathscr X, \mathscr A\}$, whose elements are indexed by $(\theta, g)\mapsto P_{\theta, g}$, where $\theta\in \Theta$ is a Euclidean parameter and $g\in \mathscr G$. In this model, we have more interest on $\theta$, while $g$ is a nuisance parameter. To study $\theta$ is equivalent to study $\psi (P)$, for some functional $\psi: \mathcal P \mapsto \Theta$.

Let $\pi$ denote a prior on $\mathscr G$, and for any measurable subset $B\subset \mathscr G$, the posterior probability of $g\in B$ given $\mathbb X_n$ is
$$
\pi(B\mid \mathbb X_n)=\frac{\int_B\prod_{i=1}^ng(X_i)\pi(dg)}{\int_{\mathscr G}\prod_{i=1}^ng(X_i)\pi(dg)}.
$$
Let $g_n^*=\int g \pi(dg \mid \mathbb X_n)$ denote the Bayesian nonparametric expected {\em a posteriori} estimate. Our proposed method can be described formally as follows:

MHB:  Minimum Hellinger distance estimator with Bayesian nonparametric density estimation:
\begin{equation}\label{eq:model3}
\hat \theta_1={\rm argmin}_{\theta \in \Theta} \ h \left (f_{\theta}, {g_n^{*}} \right ).
\end{equation}
This estimator  replaces the kernel density estimate in the classical minimum Hellinger distance method introduced in \cite{Beran77} by the posterior expectation of the density function.

For this method, we will view $\hat{\theta}_1$ as the value at $g_n^*$ of a functional $T: \mathscr G \mapsto \Theta$, which is defined via
\begin{equation}\label{eq:defT}
\|f^{1/2}_{T(g)}-g^{1/2}\|=\min_{t\in\Theta}\|f_t^{1/2}-g^{1/2}\|,
\end{equation}
where $\|\cdot\|$ denotes the $L_2$ metric. We can also write $\hat \theta_1$ as $T(g_n^*)$.

In a  more general form, what we estimate is the value $\psi(P)$ of some functional $\psi: \mathcal P \mapsto \mathbb R^d$, where the $P$ stands for the common distribution from which data are generated, and  $\mathcal P$ is the set of all possible values of $P$, which also denotes the corresponding probability model. In the setting of minimum  Hellinger distance estimation, the model $\mathcal P$ is set as $\mathscr F \times \mathscr G$, $P$ can be specified as $P_{\theta, g}$, and $\psi(P)=\psi(P_{\theta,g})=\theta$. For the methods we proposed in this paper we will focus on the  functional $T: \mathscr G\mapsto \Theta$, for a given $\mathscr F$, as defined above. Note that the constraint associated to the family $\mathscr F$ is implicitly applied by $T$.

Using functional $T$, we can also propose a Bayesian method, which assigns nonparametric prior on the density space and gives inference on the unknown parameter $\theta$ of a parametric family as follows:

BMH: Bayesian inference with minimum Hellinger distance estimation:
\begin{equation}
\pi(\theta\mid \mathbb X_n)=\pi(T(g)\mid \mathbb X_n).
\end{equation}
A nonparametric prior $\pi$  on the space $\mathscr G$  and the observation $\mathbb X_n$ leads to the posterior distribution $\pi(g\mid \mathbb X_n)$, which can then be converted to the posterior distribution of  the parameter $\theta \in \Theta$ through the functional $T: \mathscr G \mapsto \Theta$.
%Both of these estimation methods are used in the situation, where the observations consist of a random sample from a common distribution with density function $g$.
%We can denote the model simply by $\mathscr G$ since it contains all probability densities.
%However, a Euclidean parameter is more user friendly in many practical areas. Under the assumption that the distribution of observations belongs to a (finite dimensional) parametric family, the density function is $f_{\theta}$ for some $\theta\in \Theta\subset \mathbb R^d$.

In the following subsections, we  discuss properties associated with the functional $T$, the consistency of MHB and BHM, and give a detailed example of the random histogram prior that we will employ, and its properties that will be used for the discussion of efficiency in the next section.

\subsection{Tangent Space and Information}

In this subsection, we obtain the efficient influence function of the functional $T$ on the linear span of the tangent set on $g_0$,   and show that the local asymptotic normality (LAN) expansion related to the norm of the efficient influence function attains the Caram\'{e}r-Rao bound. These results play  important roles in showing that BvM holds for BMH method in the next section.

Estimating the parameter by $T(g)$ under the assumption $g\in \mathscr G$ uses less information than estimating this parameter for $g\in \mathscr G^* \subset \mathscr G$. Hence the lower bound of the variance of  $T(g)$ for $g\in \mathscr G$ should be at least the supremum of the lower bounds of all parametric sub-models $\mathscr G^*=\{ G_{\lambda}: \lambda\in \Lambda \}\subset \mathscr G$.

To use mathematical tools such as functional analysis to study the properties of the proposed methods, we introduce some notations and concepts below.
Without loss of generality, we consider one-dimensional sub-models $\mathscr G^*$, which pass through the ``true'' distribution, denoted by $G_0$ with density function $g_0$. %We should have all such $\mathcal P_0$ differentiable at $P$. {\color{red} I will add one sentence here to explain why we need and can only consider the differentiable ones.}
We say a sub-model indexed by $t$, $\{g_t:0<t<\epsilon\}\subset \mathscr G$,  is differentiable in quadratic mean at $t=0$ if we have that, for some measurable function $q: supp(g_0) \mapsto \mathbb R$,
\begin{equation}\label{eq:25.13}
\int\left [  \frac{dG_t^{1/2}-dG_0^{1/2}}{t}-\frac{1}{2} qdG_0^{1/2} \right]^2\to 0,
\end{equation}
where $G_t$ is the cumulative distribution function  associated to $g_t$.
Functions $q(x)$s are  known as the score functions associated to each sub-model.  The collection of these score functions,  which is called a tangent set of the model $\mathscr G$ at $g_0$ and denoted by $\dot {\mathscr G}_{g_0}$, is induced by the collection of all  sub-models that are differentiable at $g_0$.
%We can also view these sub-models as maps from $[0,\epsilon)$ to $\mathcal P$ and denote them as $t\mapsto P_t$.
%Consider a general functional that maps nonparametric models to Euclidean parameters, such as $\psi: \mathcal P \mapsto \Theta$.

We say that $T$ is differentiable at $g_0$ relative to a given tangent set $\dot{\mathscr G}_{g_0}$,
%or say along the collection of sub-models $\{t\mapsto P_t\}$,
if there exists a continuous linear map $\dot {T}_{g_0}: L_2(G_0)\mapsto \mathbb R$ such that for every $q\in \dot{\mathscr G}_{g_0}$ and a sub-model $t\mapsto g_t$ with score function $q$,  there is
\begin{equation}
\frac{T(g_t)-T(g_0)}{t}\to \dot{T}_{g_0} q,
\end{equation}
where $L^2(G_0)=\{q: supp(g_0)\mapsto \mathbb R, \int q^2(x)g_0(x)dx<\infty\}$.
%{\color{red} This requires that the derivative of the map $t\mapsto \psi(P_t)$ exists in the ordinary sense, and also that it has a special representation. -- I think I put this here only because later in my calculation, I will need this technically. I will remove this if I do not find it is necessary when I read this again.}
By the Riesz representation theorem for Hilbert spaces, the map $\dot {T}_{g_0}$ can always be written in the form of an inner product with a fixed vector-valued, measurable function $\tilde {T}_{g_0}: supp(g_0) \mapsto \mathbb R$,
\begin{equation*}
\dot {T}_{g_0} q= \langle \tilde {T}_{g_0}, q \rangle _{G_0}=\int \tilde {T}_{g_0} q dG_0.
\end{equation*}
Let $\tilde {T}_{g_0}$ denote the unique function in $\overline{lin} \dot{\mathscr G}_{g_0}$, the closure of the linear span of the tangent set. The function $\tilde {T}_{g_0}$ is the efficient influence function and can be found as the projection of any other ``influence function'' onto the closed linear span of the tangent set.

For a sub-model $t\mapsto g_t$ whose score function is $q$, the Fisher information about $t$
at $0$ is $G_0 q^2=\int q^2dG_0$, and in this paper we use the notation $Fg$  to denote $\int gdF$ for a general function $g$ and distribution $F$. Therefore, the ``optimal asymptotic variance'' for estimating the functional $t\mapsto T(g_t)$, evaluated at $t=0$, is greater than or equal to the Caram\'er-Rao bound
\begin{equation*}
\frac{(dT(g_t)/dt)^2}{G_0 q^2}=\frac{\langle \tilde{T}_{g_0}, q \rangle^2_{G_0}}{\langle  q,q \rangle_{G_0}}.
\end{equation*}
The supremum of the right hand side (RHS) of the above expression over all elements of the tangent set is a lower bound for estimating $T(g)$ given model $\mathscr G$, if the true model is $g_0$. The supremum can be expressed in the norm of the efficient influence function $\tilde {T}_{g_0}$, by Lemma 25.19 in \cite{van2000asymptotic}. The lemma and its proof is quite neat and we reproduce it here for the completeness of the argument.

\begin{Lemma}\label{lemma25.19}
	Suppose that the functional $T: \mathscr G \mapsto \mathbb R$ is differentiable at $g_0$ relative to the tangent set $\dot {\mathscr G}_{g_0}$. Then
	\begin{equation*}
	\sup_{q\in lin \dot{\mathscr G}_{g_0}}   \frac {\langle  \tilde{T}_{g_0}, q \rangle^2_{G_0}}  {\langle q,q \rangle_{G_0}} = G_0 \tilde{T}_{g_0}^2.
	\end{equation*}
\end{Lemma}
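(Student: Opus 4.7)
The plan is to establish the equality by proving two matching inequalities, using Cauchy--Schwarz for the upper bound and approximation of $\tilde T_{g_0}$ from within the linear span for the lower bound.

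First I would handle the upper bound. For any $q\in\mathrm{lin}\,\dot{\mathscr G}_{g_0}\subset L^2(G_0)$, the Cauchy--Schwarz inequality in $L^2(G_0)$ gives
$$\langle \tilde T_{g_0}, q\rangle_{G_0}^2 \le \langle \tilde T_{g_0},\tilde T_{g_0}\rangle_{G_0}\,\langle q,q\rangle_{G_0} = G_0\tilde T_{g_0}^2 \cdot \langle q,q\rangle_{G_0}.$$
Dividing by $\langle q,q\rangle_{G_0}$ (and trivially handling $q=0$) and taking the supremum yields
$$\sup_{q\in\mathrm{lin}\,\dot{\mathscr G}_{g_0}} \frac{\langle\tilde T_{g_0},q\rangle_{G_0}^2}{\langle q,q\rangle_{G_0}} \le G_0\tilde T_{g_0}^2.$$

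For the matching lower bound I would exploit the fact that, by the Riesz representation theorem as invoked in the text, $\tilde T_{g_0}$ is taken to lie in $\overline{\mathrm{lin}}\,\dot{\mathscr G}_{g_0}$. If $\tilde T_{g_0}=0$ in $L^2(G_0)$ there is nothing more to show, so suppose $G_0\tilde T_{g_0}^2>0$. By definition of the closure, there exists a sequence $q_n\in\mathrm{lin}\,\dot{\mathscr G}_{g_0}$ with $\|q_n-\tilde T_{g_0}\|_{G_0}\to 0$. Continuity of the inner product then gives $\langle\tilde T_{g_0},q_n\rangle_{G_0}\to G_0\tilde T_{g_0}^2$ and $\langle q_n,q_n\rangle_{G_0}\to G_0\tilde T_{g_0}^2$, so
$$\frac{\langle\tilde T_{g_0},q_n\rangle_{G_0}^2}{\langle q_n,q_n\rangle_{G_0}} \;\longrightarrow\; \frac{(G_0\tilde T_{g_0}^2)^2}{G_0\tilde T_{g_0}^2} = G_0\tilde T_{g_0}^2.$$
Combining this with the Cauchy--Schwarz bound gives the claimed equality.

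There is essentially no technical obstacle: the argument is a one-page Hilbert space computation relying only on Cauchy--Schwarz and the density of $\mathrm{lin}\,\dot{\mathscr G}_{g_0}$ in $\overline{\mathrm{lin}}\,\dot{\mathscr G}_{g_0}$. The only subtlety worth flagging is the choice of representative for $\tilde T_{g_0}$; the statement of the lemma implicitly uses the projection of any Riesz representer onto $\overline{\mathrm{lin}}\,\dot{\mathscr G}_{g_0}$, which is precisely what makes the approximation step in the lower bound work. Without this choice, $\tilde T_{g_0}$ could have an orthogonal component not reached by any $q_n\in\mathrm{lin}\,\dot{\mathscr G}_{g_0}$, and the supremum would strictly undercut $G_0\tilde T_{g_0}^2$.
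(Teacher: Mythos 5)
Your argument is correct and is essentially the paper's own proof written out in full: the paper likewise cites the Cauchy--Schwarz inequality $(G_0\tilde T_{g_0}q)^2\leq G_0\tilde T_{g_0}^2\,G_0q^2$ for the upper bound and the fact that $\tilde T_{g_0}$ lies in the closure of $\mathrm{lin}\,\dot{\mathscr G}_{g_0}$ for the attainment, which you merely make explicit via the approximating sequence $q_n$. Your closing remark about the choice of representative is a useful clarification but not a departure from the paper's route.
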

\begin{proof} This is a consequence of the Cauchy-Schwarz inequality $(G_0\tilde{T}_{g_0}q )^2\leq G_0\tilde {T}_{g_0}^2 G_0q^2$ and the fact that, by definition, the efficient influence function, $\tilde T_{g_0}$, is contained in the closure of $lin \dot{\mathscr G}_{G_0}$.
\end{proof}

Now we show that functional $T$ is differentiable under some mild conditions and construct its efficient influence function in the following theorem.
\begin{Theorem}\label{lem:tildeT}
	For the functional $T$ defined in (\ref{eq:defT}),  and
	for $t\in \Theta\subset \mathbb R$, let $s_t(x)$ denote $f^{1/2}_{\theta}(x)$ for $\theta=t$, we assume that there exist   $\dot s_t(x)$ and   $\ddot{s}_t(x)$ both in $L_2$,  such that for  $\alpha$ in a neighborhood of zero,
	\begin{eqnarray}
	s_{t+\alpha }(x) &=&s_t(x)+\alpha \dot {s} _t(x)+\alpha u_{\alpha} (x), \label{eq:2.5}\\
	\dot {s}_{t+\alpha }(x) &=&\dot{s}_t(x)+ \alpha \ddot {s}_t(x)+\alpha v_{\alpha} (x) \label{eq:2.6},
	\end{eqnarray}
	where $u_{\alpha}$ and $v_{\alpha}$ converge to zero as $\alpha \to 0$. Assuming  $T(g_0)\in  int(\Theta)$,  the efficient influence function of  $T$ is
	\begin{equation}\label{eq:efficientfunc}
	\tilde T_{g_0}=  \left ( -\left [\int  \ddot s_{T(g_0)}(x) g_0^{\frac{1}{2}}(x)dx\right ] ^{-1}   +a_t\right)
	%\left(
	\frac{\dot s_{T(g_0)}(x)  } {2g_0^{\frac{1}{2}(t)}} % -
	%\int \frac{\dot s_{T(g)}(x)  } {2g^{\frac{1}{2}}(t)}g(t)dx
	%\right ),
	\end{equation}
where $a_t$  converges to $0$ as $t\to 0$. In particular, for $g_0=f_{\theta}$
\begin{equation}\label{eq:efficientfunc2}
\tilde T_{f_{\theta}}=  \left ( -\left [\int  \ddot s_{\theta}(x) s_{\theta}(x)dx\right ] ^{-1}   +a_t\right)
%\left(
\frac{\dot s_{\theta}(x)  } {2s_{\theta}(x)}
%-\int \frac{\dot s_{\theta}(x)  } {2}s_{\theta}(x)dx
%\right ),
\end{equation}
\end{Theorem}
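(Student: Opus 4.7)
The plan is to derive the efficient influence function by implicit differentiation of the first-order optimality condition characterising $T(g)$, and then to project the resulting linear functional onto the closure of the tangent set. Since $T(g)\in \mathrm{int}(\Theta)$ minimises $t\mapsto \int (s_t(x)-g^{1/2}(x))^2\,dx$, assumption (\ref{eq:2.5}) lets me differentiate under the integral to obtain the score equation
\begin{equation*}
\int \dot s_{T(g)}(x)\bigl(s_{T(g)}(x) - g^{1/2}(x)\bigr)\,dx = 0,
\end{equation*}
which I will perturb along a one-parameter submodel.

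First I would fix a submodel $t\mapsto g_t$ that is DQM at $t=0$ with score $q\in \dot{\mathscr G}_{g_0}$, write $\theta(t):=T(g_t)$, and differentiate the identity $\Phi(t,\theta(t))=0$, where $\Phi(t,\theta):=\int \dot s_\theta(x)\bigl(s_\theta(x)-g_t^{1/2}(x)\bigr)\,dx$. The partial in $\theta$ uses the expansions (\ref{eq:2.5})--(\ref{eq:2.6}) and yields
\begin{equation*}
\partial_\theta \Phi(0,T(g_0)) = \int \ddot s_{T(g_0)}\bigl(s_{T(g_0)}-g_0^{1/2}\bigr)\,dx + \int \dot s_{T(g_0)}^2\,dx,
\end{equation*}
while the partial in $t$ uses DQM, namely $(g_t^{1/2}-g_0^{1/2})/t\to \tfrac12 q\, g_0^{1/2}$ in $L_2$, paired against $\dot s_{T(g_0)}\in L_2$ via Cauchy--Schwarz, to give $\partial_t \Phi(0,T(g_0)) = -\tfrac12 \int \dot s_{T(g_0)}(x)\, q(x)\, g_0^{1/2}(x)\,dx$. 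Solving the implicit equation produces
\begin{equation*}
\dot T_{g_0}\,q = \theta'(0) = \frac{\tfrac12\int \dot s_{T(g_0)}(x)\,q(x)\,g_0^{1/2}(x)\,dx}{\int \ddot s_{T(g_0)}\bigl(s_{T(g_0)}-g_0^{1/2}\bigr)\,dx + \int \dot s_{T(g_0)}^2\,dx}.
\end{equation*}

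Next I would rewrite the numerator as the $L_2(G_0)$ inner product $\bigl\langle \dot s_{T(g_0)}/g_0^{1/2},\,q\bigr\rangle_{G_0}$ and, as required in the definition of $\tilde T_{g_0}$, project the Riesz representer onto $\overline{\mathrm{lin}}\,\dot{\mathscr G}_{g_0}$, which is the mean-zero subspace of $L_2(G_0)$. This delivers (\ref{eq:efficientfunc}); the remainder $a_t$ absorbs the centering term $\int \dot s_{T(g_0)} g_0^{1/2}\,dx$ and the off-model contribution $\int \ddot s_{T(g_0)}(s_{T(g_0)}-g_0^{1/2})\,dx$, both of which vanish when $g_0\in \mathscr F$. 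To recover (\ref{eq:efficientfunc2}) at $g_0=f_\theta$, I would invoke the Bartlett-type identities $\int \dot s_\theta s_\theta\,dx = \tfrac12\int \dot f_\theta\,dx = 0$ and $\int \ddot s_\theta s_\theta\,dx = -\int \dot s_\theta^2\,dx$ (by the assumed smoothness and an interchange of differentiation and integration), which recast the denominator as $-\int \ddot s_\theta s_\theta\,dx$ and reduce $\tilde T_{f_\theta}$ to the classical MLE form $\ell_\theta/I(\theta)$, giving the Cram\'er--Rao bound $1/I(\theta)$ via Lemma~\ref{lemma25.19}.

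The main obstacle I anticipate is the rigorous interchange of differentiation and integration inside $\Phi(t,\theta)$ for general nuisance directions $q$: DQM supplies only $L_2$ convergence of the score difference, so the $t$-derivative of $\Phi$ must be extracted by Cauchy--Schwarz against the $L_2$ envelope $\dot s_{T(g_0)}$, with the remainders $u_\alpha$ and $v_\alpha$ controlling the $\theta$-perturbation along the way. A second delicate point is ensuring $\partial_\theta\Phi(0,T(g_0))$ is bounded away from zero, which is essentially a positive-information condition at $T(g_0)$ together with smallness of $\|g_0^{1/2}-s_{T(g_0)}\|$; this is needed to make $\dot T_{g_0}\colon L_2(G_0)\to\mathbb R$ a bona fide continuous linear map in the sense required by the differentiability definition.
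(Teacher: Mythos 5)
Your proposal is correct and follows essentially the same route as the paper: both start from the first-order condition for the Hellinger affinity (your $\Phi(t,\theta)=0$ is the paper's $\int \dot s_{T(g_t)}g_t^{1/2}\,dx=0$, since $\int \dot s_\theta s_\theta\,dx=0$), perturb it along a differentiable submodel, solve implicitly for $dT(g_t)/dt$ at $t=0$, and read off the Riesz representer with the (vanishing) centering term. Your denominator $\int \ddot s_{T(g_0)}\bigl(s_{T(g_0)}-g_0^{1/2}\bigr)\,dx+\int \dot s_{T(g_0)}^2\,dx$ coincides with the paper's $-\int \ddot s_{T(g_0)}g_0^{1/2}\,dx$ for any $g_0$ (not only $g_0\in\mathscr F$) via the identity $\int \ddot s_\theta s_\theta\,dx+\int \dot s_\theta^2\,dx=0$, so nothing needs to be absorbed into $a_t$ beyond the $t\to 0$ remainder the paper already puts there.
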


\begin{proof} Let the $t$-indexed sub-model be
\begin{equation}\label{eq:maxtan}\nn
g_t:=(1+tq(x))g_0(x),
\end{equation} where $q(x)$ satisfies $\int q(x)g_0(x)dx=0$ and $q\in L_2(g_0)$.  By direct calculation, we see that $q$ is the score function associated to such sub-model at $t=0$ in the sense of (\ref{eq:25.13}) and thus the collection of $q$ is the maximal tangent set.

By the definition of $T$,  $T(g_0)$ maximizes $\int s_t(x)g_0^{1/2}(x)dx$.  From (\ref{eq:2.5}), we have that
\begin{equation}
lim_{\alpha\to 0} \alpha^{-1}\int [ s_{t+\alpha }(x) -s_t(x) ]g_0^{1/2}(x)dx=\int \dot s_{t}(x)g_0^{1/2}(x)dx.
\end{equation}
Since $T(g_0)\in int(\Theta)$, we have that
\begin{equation}\label{eq:inner0}
\int \dot s_{T(g_0)}(x)g_0^{1/2}(x)dx=0.
\end{equation}
Similarly  $\int \dot s_{T(g_t)}(x)g_t^{1/2}(x)dx  =0$.  Using (\ref{eq:2.6}) to substitute $\dot s_{T(g_t)}$, we have that
$$
0= \int [ \dot s_{T(g_0)}(x)+\ddot s _{T(g_0)}(x)(T(g_t)-T(g_0))+v_t(x)(T(g_t)-T(g_0)) ]g_t^{1/2}(x)dx,
$$
where the components of the $p\times p$ matrix $v_t(x)$ converge in $L_2$ to zero as $t\to 0$ since $T(g_t)\to T(g_0)$. Thus,
\begin{eqnarray}
\lefteqn {\lim_{t\to 0}\frac{1}{t}[T(g_t)-T(g_0)]}\nn\\
&&= -\lim_{t\to 0}\frac{1}{t}  \left [\int ( \ddot s_{T(g_0)}(x) +v_t(x)  )g_t^{\frac{1}{2}}(x)dx\right ] ^{-1} \int \dot s_{T(g_0)}(x)g_t^{1/2}(x)dx \nn\\
&&=  \lim_{t\to 0}\frac{1}{t}   \left ( -\left [\int ( \ddot s_{T(g_0)}(x) )g_0^{\frac{1}{2}}(x)dx\right ] ^{-1}   +a_t\right)  \int \dot s_{T(g_0)}(x)(g_t^{\frac{1}{2}}(x)-g_0^{\frac{1}{2}}(x))dx \nn \\
&&=       \left ( -\left [\int ( \ddot s_{T(g_0)}(x) )g_0^{\frac{1}{2}}(x)dx\right ] ^{-1}   +a_t\right)  \int \frac{\dot s_{T(g_0)}(x)  } {2g_0^{\frac{1}{2}}(x)} q(x)g_0(x)dx  \nn
\end{eqnarray}
Since by the definition of $\tilde T$, which requires $\int \tilde T_{g_0} g_0(x)dx=0$, we have that
\begin{eqnarray*}
	\tilde T_{g_0}&= & \left ( -\left [\int  \ddot s_{T(g_0)}(x) g_0^{\frac{1}{2}}(x)dx\right ] ^{-1}   +a_t\right)
	\left(  \frac{\dot s_{T(g_0)}(x)  } {2g_0^{\frac{1}{2}}(x)} -
	\int \frac{\dot s_{T(g_0)}(x)  } {2}g_0^{\frac{1}{2}}(x)dx
	\right )\\
	&=& \left ( -\left [\int  \ddot s_{T(g_0)}(x) g_0^{\frac{1}{2}}(x)dx\right ] ^{-1}   +a_t\right)
	%\left(
	\frac{\dot s_{T(g_0)}(x)  } {2g_0^{\frac{1}{2}}(x)} % -
	%\int \frac{\dot s_{T(g)}(x)  } {2g^{\frac{1}{2}}(t)}g(t)dx
	%\right ),
\end{eqnarray*}
By the same argument we can show that when $g_0=f_{\theta}$, equation (\ref{eq:efficientfunc2}) holds.
\end{proof}

Some relatively accessible conditions under which (\ref{eq:2.5}) and (\ref{eq:2.6}) hold are given  by Lemma 1 and 2 in \cite{Beran77}. We do not repeat them here.

Now we can expand $T$ at $g_0$ as
\begin{equation}\label{eq:4.2}
T(g)-T(g_0)=\langle \frac{g-g_0}{g_0}, \tilde T_{g_0} \rangle_{G_0}+\tilde r(g,g_0),
\end{equation}
where $\tilde T$ is given in Theorem \ref{lem:tildeT} and $\tilde r=0$.

% We first introduce some notation on expanding both the log-likelihood $\ln (\eta):=\ln (\eta, \mathbb X_n)$ in the model and the functional of interest $T(\eta)$. Both expansions have remainders $R_n$ and $r$, respectively.

\subsection{Consistency of MHB and BMH}

Since $T(g)$ may have more than one value, the notation $T(g)$ is used to denote any arbitrary one of the possible values. In \cite{Beran77}, the existence, continuity in Hellinger distance and uniqueness of functional $T$ are ensured under the conditions:

\begin{description}
\item[A1] (i) $\Theta$ is compact, (ii) $\theta_1\neq \theta_2$ implies $f_{\theta_1}\neq f_{\theta_2}$ on a set of positive Lebesgue measure, and (iii) for almost every $x$, $f_{\theta}(x)$ is continuous in $\theta$. \label{cond:a1}

When a Bayesian nonparametric denstity estimatar is used, we assume the posterior  consistency:

\item[A2] For any given $\epsilon>0$, $\pi\{g: h(g,f_{\theta_0})>\epsilon\mid \mathbb X_n  \}\to 0$ in probability. \label{cond:a2}
\end{description}
Under conditions A1 and A2,  consistency holds for MHB and BMH.

\begin{Theorem}\label{thm:1}
	Suppose that conditions A1 and A2 hold, then
	\begin{itemize}
		\item [1.]  $\| g_n ^{*1/2}-f_{\theta_0}^{1/2}\|^2\to 0$ in probability,   $T(g_n^{*})\to T(f_{\theta_0})$ in probability, and hence $\hat\theta_1 \to \theta_0$ in probability;
		\item [2.] For any given $\epsilon>0$, $\pi(|\theta-\theta_0|>\epsilon\mid \mathbb X_n) \to 0 $ in probability.
	\end{itemize}
\end{Theorem}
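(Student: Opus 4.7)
The plan is to combine two ingredients already available: assumption A2 gives posterior contraction of $g$ toward $f_{\theta_0}$ in Hellinger distance, and A1 (via Beran 1977) guarantees that the functional $T$ is Hellinger-continuous at $f_{\theta_0}$. Both parts of the theorem will follow by pushing the Hellinger statement through $T$; the only extra work is converting posterior consistency of $g$ into Hellinger consistency of the posterior mean $g_n^*$.

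For Part 1, I would first show that $h^2(g_n^*, f_{\theta_0}) \to 0$ in probability. The key inequality is a Jensen-type bound: since $\sqrt{\cdot}$ is concave,
\begin{equation*}
\sqrt{g_n^*(x)} = \sqrt{\int g(x)\,\pi(dg \mid \mathbb{X}_n)} \;\geq\; \int \sqrt{g(x)}\,\pi(dg\mid \mathbb{X}_n)
\end{equation*}
pointwise in $x$. Multiplying by $f_{\theta_0}^{1/2}(x)$, integrating, and using $h^2(g,f)=2-2\int\sqrt{gf}\,dx$ yields
\begin{equation*}
h^2(g_n^*, f_{\theta_0}) \;\leq\; \int h^2(g, f_{\theta_0})\,\pi(dg\mid \mathbb{X}_n).
\end{equation*}
Because $h^2 \leq 2$ is uniformly bounded, for any $\epsilon>0$ the right-hand side is at most $\epsilon^2 + 2\,\pi(\{g: h(g,f_{\theta_0}) > \epsilon\}\mid \mathbb{X}_n)$. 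Assumption A2 drives the second piece to zero in probability, and since $\epsilon$ is arbitrary, $h(g_n^*, f_{\theta_0}) \to 0$ in probability; this is exactly $\| g_n^{*1/2}-f_{\theta_0}^{1/2}\|^2 \to 0$ in probability. Continuity of $T$ in Hellinger distance (Beran 1977 under A1) then gives $\hat\theta_1 = T(g_n^*) \to T(f_{\theta_0}) = \theta_0$ in probability.

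For Part 2, the posterior on $\theta$ is by definition the pushforward of $\pi(\cdot\mid\mathbb{X}_n)$ under $T$, so
\begin{equation*}
\pi(|\theta-\theta_0|>\epsilon \mid \mathbb{X}_n) \;=\; \pi\bigl(\{g : |T(g) - \theta_0| > \epsilon\}\bigm| \mathbb{X}_n\bigr).
\end{equation*}
Continuity of $T$ at $f_{\theta_0}$ supplies $\delta=\delta(\epsilon)>0$ with $h(g,f_{\theta_0}) \leq \delta \Rightarrow |T(g) - \theta_0| \leq \epsilon$, so $\{g : |T(g)-\theta_0| > \epsilon\} \subseteq \{g : h(g,f_{\theta_0}) > \delta\}$, and A2 applied to the latter event closes the argument.

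The main obstacle is the Jensen-type inequality controlling $h^2(g_n^*, f_{\theta_0})$ by the posterior average of $h^2(g, f_{\theta_0})$; this is the bridge from posterior consistency of $g$ to Hellinger consistency of $g_n^*$, and without it A2 alone is insufficient. A secondary point to check is that Beran's continuity of $T$ applies at arguments such as $g_n^*$ rather than only at members of the parametric family $\mathscr{F}$; this is immediate since the continuity statement is formulated on the Hellinger neighborhood of $f_{\theta_0}$ in $\mathscr{G}$, to which $g_n^*$ belongs almost surely by construction of the prior.
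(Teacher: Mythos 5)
Your proof is correct and follows essentially the same strategy as the paper's: convert posterior consistency (A2) into Hellinger consistency of the posterior mean $g_n^*$ via a convexity bound plus a split over $\{h(g,f_{\theta_0})\le\epsilon\}$ and its complement, then push everything through Beran's Hellinger-continuity of $T$ at $f_{\theta_0}$ (A1) for both the point estimate and the pushforward posterior; your Part 2 is word-for-word the paper's argument. The one place you diverge is the bridging inequality: the paper routes through the $L_1$ norm, using $h^2(g_n^*,f_{\theta_0})\le\|g_n^*-f_{\theta_0}\|_1$, the triangle inequality under the posterior integral, and then $\|g-f_{\theta_0}\|_1\le\sqrt{2}\,h(g,f_{\theta_0})$, whereas you apply Jensen's inequality to the concave square root to get $h^2(g_n^*,f_{\theta_0})\le\int h^2(g,f_{\theta_0})\,\pi(dg\mid\mathbb{X}_n)$ directly. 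Your version is slightly cleaner (it avoids the back-and-forth between $h$ and $\|\cdot\|_1$ and exploits the convexity of squared Hellinger distance in its argument), but both bounds feed into the identical splitting argument, so the difference is cosmetic rather than structural.
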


\begin{proof} Part 1:
To show that $\|g_n^{*1/2}-f_{\theta_0}^{1/2}\|^2\to 0$ in probability, which is equivalent to showing that $\int \left ( \int g\pi(dg\mid \mathbb X_n) ^{1/2}-f^{1/2}_{\theta_0}\right)^2 dx \to 0 $ in probability, it is sufficient to show that $\int \left| \int g\pi(dg\mid \mathbb X_n)-f_{\theta_0}\right|dx \to 0$ in probability, since $h^2(f,g)\leq \|f-g\|_1$. We have that
\begin{eqnarray*}
	\int \left| \int g\pi(dg\mid \mathbb X_n)-f_{\theta_0}\right|dx&=&
	\int \left| \int (g-f_{\theta_0})\pi(dg\mid \mathbb X_n)\right|dx \\
	& \leq & \int  \int \left|g-f_{\theta_0}\right|\pi(dg\mid \mathbb X_n)dx\\
	&=&\int  \int \left|g-f_{\theta_0}\right|dx \pi(dg\mid \mathbb X_n) \\
	& \leq & \int  \sqrt 2 h(g,f_{\theta_0}) \pi(dg\mid \mathbb X_n).
\end{eqnarray*}
Note that the change of order of integration is due to Fubini's theorem and the last inequality is due to  $\|f-g\|_1\leq \sqrt 2 h(f,g)$.
Split the integral on the right hand side of the above expression into two parts:
$$\int_{\mathscr A}  \sqrt 2h(g,f_{\theta_0}) \pi(dg\mid \mathbb X_n) + \int_{\mathscr A^c}  \sqrt 2h(g,f_{\theta_0}) \pi(dg\mid \mathbb X_n),$$
where $\mathscr A=\{g: h(g,f_{\theta_0})\leq \epsilon \}$ for any given $\epsilon>0$. The first term is bounded by $\epsilon$ by construction. By condition A1, the posterior of measure of $\mathscr A^c$ to $0$ in probability as $n \to \infty$. Since Hellinger distance is bounded by 2, so does the second term above.  This completes the proof for $\| g_n ^{*1/2}-f_{\theta_0}^{1/2}\|^2\to 0$ in probability.

To show $T(g_n^{*})\to T(f_{\theta_0})$ and  $\hat\theta_1 \to \theta_0$ in probability, we need that the functional $T$ is continuous and unique at $f_{\theta_0}$, which is proved by Theorem 1 in \cite{Beran77} under condition A1.

Part 2:  By condition A1 and  Theorem 1 in \cite{Beran77},  the functional $T$ is continuous and unique at $f_{\theta_0}$. Hence, for any given $\epsilon>0$, there exist $\delta>0$ such that $|T(g)-T(f_{\theta_0})|<\epsilon$ when $h(g,f_{\theta_0})<\delta$. By condition A2, we have that $\pi(h(g,f_{\theta_0})<\delta)\to 1$, which implies that $\pi(|\theta-\theta_0|<\epsilon)\to 1$ in probability.
%and  it converges to 0 in probability by the posterior consistency condition.
%Because that the squared Hellinger distance is bounded from above by 2, it %is easy to see that $\| g_n ^{*1/2}-g_0^{1/2}\|\to 0$ in probability since %for any given $\epsilon>0$, $\pi\{g: %D_H(g,f_{\theta_0})>\epsilon\mid\mathbb X_n  \}\to 0$ in probability.
%Then by Theorem 2.2 in \cite{Cheng06}, part 1 of this theorem follows.
\end{proof}

Note that if we change the $\epsilon$ in condition A2 to $\epsilon_n$, a sequence converging to $0$, then we can apply the results for the concentration rate of the Bayesian nonparametric density estimation here. However, such approach cannot lead to the general ``efficiency'' claim, no matter in the form of rate of concentration or asymptotic normality. There are two reasons for this. First, the rate of concentration for Bayesian nonparametric posterior is about $n^{-2/5}$ for a rather general situation and $(\log n)^a \times n^{-1/2}$, where $a>0$, for some special cases (see \cite{ghosal00},  \cite{ghosal07}, \cite{Ghosh2003}).
This concentration rate is not sufficient in many situations to  directly imply that the concentration of the corresponding parametric estimates achieve the lower bound of the variance  given in the Cram\'{e}r-Rao theorem. Second, the Hellinger distances between pairs of densities as  functions of  parameters, vary among different parametric families. Therefore, obtaining the rate of concentration in parameters from the rate of convergence in the densities cannot be generally applied to different distribution families. %Hence, such approach is not of interest here, although it may be useful to obtain some result for specific cases, which is out of the scope of this paper.

Also note that, although $\Theta$ is required to be compact in condition A1., Theorem \ref{thm:1} is useful for $\Theta$ that is not compact, as long as the parametric family $f_{\theta}: \theta\in \Theta$ can be re-parameterized where the space of new parameters can be embedded within a  compact set. An example of re-parameterizing a general location-scale family with parameters $\mu\in \mathbb R$ and $\sigma\in \mathbb R^+$ to a family  with parameters $t_1=tan^{-1}(\mu)$ and $t_2=tan^{-1}(\sigma)$, where $\Theta_{(t_1,t_2)}=(-\pi/2, \pi/2) \times (0,\pi/2)$ and $\Theta \subset \bar \Theta = [-\pi/2, \pi/2] \times [0,\pi/2]$, is discussed in \cite{Beran77}, and the conclusions of Theorem 1 in \cite{Beran77} is  still valid for a location-scale family. Therefore, Theorem \ref{thm:1} remains valid for the same type of the families, whose parameter space may not be compact  and for the same reasons; the compactness requirement stated in the theorem is mainly for the mathematical simplicity.

\subsection{Prior on Density Functions}

%To avoid unnecessary mathematical complexity,  we will limit our prior to be a random histogram as defined below.
%The further discussion of the choice of the prior will be given at the end of this section.
We introduce a random histogram as an example for priors used in Bayesian nonparametric density estimation. It can be seen as a simplified version of Dirichlet process mixture (DPM) prior, which is commonly used in practice. Both DPM and random histogram are mixture densities. While DPM uses a Dirichlet process to model the weights within an infinite mixture of kernels, the random histogram prior only has finite number of components. Another difference is that although we specify the form of the kernel function for DPM, the kernel function could be any density function in general, while the random histogram uses only the uniform density as it mixing kernel. Nevertheless, the limit on finite number of the mixing components is not that important in practice, since the Dirichlet process will  always be truncated in computation.  In next section, we will verify that the random histogram satisfies the conditions that are needed for our proposed methods to be efficient. On the other hand, although we believe that DPM should also lead to efficiency, the authors are unaware of the  theoretical results or tools required to prove it. This is mostly due to the flexibility of DPM, which in turn significantly increases the mathematical complexity of the analysis.

%We define random histogram prior as follows: $h\sim \mu$; given $h$, choose $P$ on $\mathbb Z=\{0,\pm1,\pm2,\ldots\}$ with $P\sim \mathscr D_{\alpha h}$, where $\mathscr D_{\alpha h}$ is a Dirichlet process with parameter $\alpha h$; and $X_1,X_2,\ldots,X_n$ are, given $h, P$, i.i.d. $f_{h,p}$, where
%$$
%f_{h,p}=\sum_{-\infty}^{\infty} \frac{P(i)}{h}\Ind_{(ih,(i+1)h]}(x),
%$$
%and $\Ind_{(a,b]}(x)$ is the indicator  function that is equal to $1$ while $x\in (a,b]$ and $0$ otherwise.

For any $k \in \mathbb N$, denote the set of all regular $k$ bin histograms on $[0,1]$ by
$
\mathscr H_k=\{f\in L^2([0,1]): m(x)=\sum_{j=1}^{k}f_j\Ind_{I_j}(x), f_j\in \mathbb R, j=1,\ldots,k \},
$
where $I_j=[ (j-1)/k,j/k )$.  Denote the unit simplex in $\mathbb R^k$ by
$\mathscr S_k=\{\omega \in [0,1]^k: \sum _{j=1}^k\omega_j=1 \}$
The subset of $\mathscr H_k$,
$
\mathscr H_k^1=  \{ f\in L^2(\mathbb R), f(x)=f_{\omega, k}=k\cdot\sum_{j=1}^k \omega_j\Ind_{I_j}(x), (\omega_1, \ldots, \omega_k)\in \mathscr S_k \}$, denotes the collection of densities on $[0,1]$ in the form of histogram.

%{\footnotesize If $\mathscr H$ is not needed in the following, I will change this part to $\mathscr H^1$ only to make it simpler.}

The set $\mathscr H_k$ is a closed subset of $L_2[0,1]$. For any function $f\in L_2[0,1]$, denote its projection in $L_2$ sense on $\mathscr H_k$ by $f_{[k]}$, where $f_{[k]}=k\sum_{j=1}^k \Ind_{I_j}\int_{I_j}f$.

%({\color{red} revise this paragraph with the definition of $\mathscr H_k$ part a little more. })

We assign priors on $\mathscr H_k^1$ via $k$ and $(\omega_1,\ldots,\omega_k)$ for each $k$.  A degenerate case is to let $k=K_n=o(n)$. Otherwise, let $p_{k}$ be a distribution on positive integers, where
\begin{equation}\label{eq:4.6}
k\sim p_{k}, e^{-b_1k\log(k)}\leq p_{k}(k)\leq e^{-b_2k\log(k)},
\end{equation}
for all $k$ large enough and some $0<b_1<b_2<\infty$. For example, condition (\ref{eq:4.6}) is satisfied by the Poisson distribution, which is commonly used in Bayesian nonparametric models.

Conditionally on $k$, we consider a Dirichlet prior on $\omega=\{\omega_1,\ldots, \omega_k \}$:
\begin{equation}\label{eq:4.5}
\omega\sim \mathscr D(\alpha_{1,k}, \ldots, \alpha_{k,k}),\qquad c_1k^{-a}\leq \alpha_{j,k}\leq c_2,
\end{equation}
for some fixed constants $a, c_1, c_2>0$ and any $1\leq j\leq k$. For posterior consistency, we need the following condition:
\begin{equation}\label{eq:4.9}
\sup_{k\in \mathscr K_n} \sum _{j=1}^{k} \alpha_{j,k} = o(\sqrt n),
\end{equation}
where $\mathscr K_n \subset \{1,2,\ldots, \lfloor n/(\log n)^2 \rfloor \}$.

The consistency result of this prior is given by Proposition 1. in the supplement to \cite{castillo2015bernstein}.  For $n\geq 2, k\geq 1, M>0$,   let
\begin{equation}\label{eq:4.8}
A_{n,k}(M)=\{g\in \mathscr H_k^1, h(g, g_{0, [k]})\}<M\epsilon_{n,k}
\end{equation}
where $\epsilon_{n,k}^2=k\log n/n$, denote a neighborhood of $g_{0,[k]}$, we have that
\begin{itemize}
	\item (a) there exist $c$, $M>0$ such that
	\begin{equation}\label{eq:prior1}
	P_0\left [  \exists k\leq \frac{n}{\log n} ; \pi[g \notin A_{n,k}(M)\mid \mathbb X_n, k]> e^{-ck\log n} \right ]=o(1).
	\end{equation}
	\item (b) Suppose $g_0\in \mathscr C^{\beta}$ with $0<\beta\leq 1$. If $k_n(\beta)=(n\log n)^{1/(2\beta+1)}$ and $\epsilon_n(\beta)=k_n(\beta)^{-\beta}$, then for $k_1$, $M$ sufficiently large,
	\begin{equation}\label{eq:prior2}
	\pi[h(g_0,g)\leq M\epsilon_n(\beta); k\leq k_1k_n(\beta)\mid \mathbb X_n]=1+o_p(1),
	\end{equation}
	where $\mathscr C^{\beta}$ denotes the class of $\beta$-H\"{o}lder functions on $[0,1]$.
\end{itemize}
This means that the posterior of the density function  concentrates around the projection $g_{0[k]}$ of $g_0$ and also around $g_0$ itself in terms of the Hellinger distance. We can easily conclude that
$\pi(\mathscr K_n \mid \mathbb X_n)=1+o(1)$  from (\ref{eq:prior2}) for $g_0\in \mathscr C^{\beta}$.

Note that although the priors we defined above are on the densities on $[0,1]$, this is for mathematical simplicity, which could easily be extended to space of probability densities on any given compact set. Further, transformations of $\mathbb X_n$, similar to those discussed at the end of Subsection 2.2, can extend the analysis to the real line; also refer to \cite{Beran77, Amewou2003} for more example and details.

\section{Efficiency}

We say that both MHB and BMH methods are efficient if the lower bound of the variance of the estimate, in the sense of Cram\`{e}r and Rao's theorem,  is achieved.

%The rest of this section is organized as: subsection 3.1 discusses the prior we used for the Bayesian nonparametric density estimation, whose property will be necessary for us to show the efficiency of the method. Although some other priors could also achieve such efficiency, but the proof will be mathematically unnecessarily difficult. Using a less general prior does demolish the  goals, which are to show that the disparity method in a Bayesian framework is still efficient, and  the efficiency can be shown by treating the proposed methods as semi-parametric models. Subsection 3.2 and 3.3 study  the efficiency of  Method 1 and 2 respectively.

\subsection{Asymptotic Normality of MHB}
Consider the maximal tangent set at $g_0$, which is defined as $\mathcal{H}_T=\{q\in L^2(g_0), \int q g_0=0 \}$. Denote the inner product on $\mathcal H_T$ by $\langle q_1, q_2\rangle_L=\int q_1q_2 g_0$, which induces the L-norm as:
\begin{equation}\label{eq:4.1-1}
\|g\|_L^2=\int_0^1 (g-G_0g)^2g_0.
\end{equation}
Note that the inner product $\langle \cdot, \cdot\rangle_L$ is equivalent to the inner product introduced in Section 2.1, and the induced L-norm  corresponds to the local asymptotic normality (LAN) expansion. Refer to \cite{Rivoirard2012} and Theorem 25.14 in \cite{van2000asymptotic} for more details.

With functional $T$ and priors on $g$ defined  in previous section,  Theorem \ref{thm:2} shows that MHB method is efficient when the parametric family contains the true model.
\begin{Theorem}\label{thm:2}
	Let two priors $\pi_1$ and $\pi_2$ be defined by (\ref{eq:4.6})-(\ref{eq:4.5}) and prior on $k$ be either a Dirac mass at $k=K_n=n^{1/2}(\log n)^{-2}$ for $\pi_1$ or $k\sim \pi_k$ given by (\ref{eq:4.6}) for $\pi_2$. Then the limit distribution of $n^{1/2}[T(g^*_n)-T(g_0)]$ under $g_0$ as $n\to \infty$ is $Norm(0, \|\tilde T_{g_0}\|^2_L)$, where $\|\tilde T_{g_0}\|^2_L=I(\theta_0)^{-1}$ when $g_0=f_{\theta_0}$.
\end{Theorem}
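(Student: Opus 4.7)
The plan is to combine the first-order expansion (\ref{eq:4.2}) of $T$ with the explicit Dirichlet form of the posterior on the histogram weights, reducing the limit law of $n^{1/2}[T(g_n^*)-T(g_0)]$ to that of $n^{-1/2}\sum_i \tilde T_{g_0}(X_i)$, whereupon a standard CLT yields the claimed normal limit with variance $\|\tilde T_{g_0}\|_L^2$. Starting from
$$
n^{1/2}[T(g_n^*)-T(g_0)] = n^{1/2}\int \tilde T_{g_0}(g_n^*-g_0)\,dx + n^{1/2}\tilde r(g_n^*,g_0),
$$
the first task is to argue that $n^{1/2}\tilde r = o_p(1)$. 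The quadratic-type control on $\tilde r$ implied by the second-order expansions (\ref{eq:2.5})--(\ref{eq:2.6}) reduces this to a sufficiently fast Hellinger contraction of $g_n^*$ to $g_0$, which follows from (\ref{eq:prior1})--(\ref{eq:prior2}) transferred from the posterior to the posterior mean by convexity, in the same manner as in the proof of Theorem \ref{thm:1}.

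The central computation is then the linear term. Conditionally on $k$, the Dirichlet posterior on $\omega$ gives
$$
g_n^*(x) = k\sum_{j=1}^k \frac{\alpha_{j,k}+N_{j,k}}{A_k+n}\Ind_{I_j}(x), \qquad A_k:=\sum_{j=1}^k\alpha_{j,k}, \;\; N_{j,k}:=\#\{i: X_i\in I_j\}.
$$
Integrating against $\tilde T_{g_0}$, using $\int\tilde T_{g_0}\,g_0\,dx=0$, and invoking (\ref{eq:4.9}) so that $A_k=o(\sqrt n)$ and the prior term $\sum_j \alpha_{j,k}\,k\int_{I_j}\tilde T_{g_0}$ is $o(\sqrt n)$, one obtains
$$
n^{1/2}\int \tilde T_{g_0}(g_n^*-g_0)\,dx = \frac{1}{\sqrt n}\sum_{i=1}^n \tilde T_{g_0,[k]}(X_i) + o_p(1),
$$
where $\tilde T_{g_0,[k]}$ denotes the $L^2$-projection of $\tilde T_{g_0}$ onto $\mathscr H_k$. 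With the scaling $k=K_n=n^{1/2}(\log n)^{-2}$, the bias $n^{1/2}\int(\tilde T_{g_0,[k]}-\tilde T_{g_0})g_0\,dx$ reduces to $n^{1/2}\int \tilde T_{g_0}(g_{0,[k]}-g_0)\,dx$ and vanishes under adequate Hölder smoothness of $g_0$ (and hence of $\tilde T_{g_0}$), while the variance $\|\tilde T_{g_0,[k]}-\tilde T_{g_0}\|_{g_0}^2$ tends to zero by $L^2(g_0)$-approximation. The ordinary CLT then delivers $N(0,\|\tilde T_{g_0}\|_L^2)$, since $E_{g_0}\tilde T_{g_0}=0$ and $\int\tilde T_{g_0}^2\,g_0=\|\tilde T_{g_0}\|_L^2$; when $g_0=f_{\theta_0}$, the formula (\ref{eq:efficientfunc2}) identifies this variance with $I(\theta_0)^{-1}$.

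For the random-$k$ prior $\pi_2$, I would run the identical argument conditionally on $k$ and marginalize using the fact, recorded after (\ref{eq:prior2}), that $\pi(\mathscr K_n\mid\mathbb X_n)=1+o_p(1)$. I expect the main obstacle to be propagating the asymptotic-normality conclusion through this marginalization: one needs uniform-in-$k\in\mathscr K_n$ versions of both the quadratic remainder bound on $\tilde r$ and the projection bias $n^{1/2}\int(\tilde T_{g_0,[k]}-\tilde T_{g_0})g_0\,dx=o(1)$, which is delicate since $\mathscr K_n$ is allowed to be very broad. A secondary but recurring technical point is that the Hellinger concentration rate for the random histogram is slower than the parametric rate, so the strictly quadratic (not merely linear) vanishing of $\tilde r$ is essential and must be extracted explicitly from (\ref{eq:2.5})--(\ref{eq:2.6}) together with a Jensen-style passage from posterior contraction of $g$ to contraction of the posterior mean $g_n^*$.
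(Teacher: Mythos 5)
Your proposal is correct in outline and, at the decisive step, takes a genuinely different and more explicit route than the paper. Both arguments start from the linearization (\ref{eq:4.2}). The paper then identifies $\sqrt n\int \tilde T_{g_0}(g_n^*-g_0)\,dx$ with $n^{-1/2}\sum_i\tilde T_{g_0}(X_i)$ by observing that $\int g_n^*\tilde T_{g_0}\,dx$ and $n^{-1}\sum_i\tilde T_{g_0}(X_i)$ each converge to $G_0\tilde T_{g_0}$ --- an argument that controls their difference only at scale $o_p(1)$, not at the needed scale $o_p(n^{-1/2})$. You instead exploit Dirichlet conjugacy to write $g_n^*$ in closed form conditionally on $k$, which yields $n^{-1/2}\sum_i\tilde T_{g_0,[k]}(X_i)$ exactly, plus a prior-mass term killed by (\ref{eq:4.9}) and a projection bias $\sqrt n\int\tilde T_{g_0}(g_{0,[k]}-g_0)$ of the type handled in Lemma \ref{lemma:balance}; replacing $\tilde T_{g_0,[k]}$ by $\tilde T_{g_0}$ in the empirical sum is then the empirical-process step corresponding to (\ref{eq:4.10-2}). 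This is essentially the Castillo--Rousseau computation that the paper defers to its BvM proof for Theorem \ref{thm:4.2}; importing it here is what actually closes the $\sqrt n$-scale gap, at the price of making explicit two hypotheses the paper's statement of Theorem \ref{thm:2} suppresses, namely H\"older smoothness $\beta>1/2$ of $g_0$ (for the projection bias) and uniformity over $k\in\mathscr K_n$ under the random-$k$ prior, both of which you correctly flag. Your treatment of $\tilde r$ as a genuine quadratic remainder requiring $\sqrt n\,h^2(g_n^*,g_0)=o_p(1)$ (which holds since $\sqrt n\,\epsilon_{n,K_n}^2=(\log n)^{-1}$) is likewise more careful than the paper's declaration that $\tilde r=0$.
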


\begin{proof} To prove this result, we  verify Lemma 25.23 in \cite{van2000asymptotic}, which is equivalent to showing that $$\sqrt n (T(g^*_n)-T(g_0))=\frac{1}{\sqrt n}\sum_{i=1}^{n} \tilde T_{g_0}(X_i)+o_{p}(1).$$
By the consistency result provided for prior $\pi_1$ and $\pi_2$ in the previous section, we consider only $g_n^*\in A_{n,k}$ for $n$ sufficiently large.  Then
by equation (\ref{eq:4.2}), we have that
$$
\sqrt n(T(g^*_n)-T(g_0))=\sqrt n \left\langle \frac {g_n^*-g_0}{g_0}, \tilde T_{g_0}\right\rangle_L+o_p(1).
$$
Therefore, showing
$$\sqrt n \int _0^1 (g^*_n(x)-g_0(x))\tilde T_{g_0}(x)dx=\frac{1}{\sqrt n}\sum_{i=1}^{n} \tilde T_{g_0}(X_i)+o_{p}(1)$$
will complete the proof. Due to $\int _0^1 g_0(x)\tilde T_{g_0}(x)dx=0$, we now need to show that $\int _0^1 g_n^*(x)\tilde T_{g_0}(x)dx=(1/n)\sum_{i=1}^{n} \tilde T_{g_0}(X_i)+o_{p}(1)$.
By the law of large numbers, we have that
$\frac{1}{n}\sum_{i=1}^{n} \tilde T_{g_0}(X_i)-G_0 \tilde T_{g_0}=o_p(1)$, and $\int _0^1 g_n^*(x)\tilde T_{g_0}(x)dx
-G_0 \tilde T_{g_0}=o_p(1)$ due to the posterior consistency demonstrated above. Therefore, we have that
\begin{eqnarray*}
\lefteqn{\frac{1}{n}\sum_{i=1}^{n} \tilde T_{g_0}(X_i)-\int _0^1 g_n^*(x)\tilde T_{g_0}(x)dx} \\
&=&\frac{1}{n}\sum_{i=1}^{n} \tilde T_{g_0}(X_i)
-\int _0^1 g_n^*(x)\tilde T_{g_0}(x)dx
+\int _0^1 g_n^*(x)\tilde T_{g_0}(x)dx
-G_0 \tilde T_{g_0} \\
&=& o_p(1).
\end{eqnarray*}
\end{proof}
%\begin{remark}
%	Refer to \cite{ghosal00} for the approximation property of the uniform density function kernel, refer to \cite{wu08} for the concentration rate result, and refer to Chapter 5 in \cite{Ghosh2003} for the explicit expression of the random histogram density estimate, it is clear that the random histogram prior satisfies lemma \ref{lem:1}.
%\end{remark}

%\begin{remark}
%	Due to the flexibility of Bayesian nonparametric density estimation and the large size of the space of the density functions, it is difficult to obtain a general result for asymptotic normality. However, besides the random histogram prior studied above, it is not hard to see that if we can somehow control the tail property of the kernel functions, which are used for the often used Dirichlet mixture priors, we will have the same asymptotic normality. Also, Dirichlet mixture prior will give a ``parametric'' estimation if the base measure of Dirichlet process $\alpha(\mathbb R)\to 0$ and in this case, the asymptotic normality follows.
%\end{remark}

\subsection{Bernstein-von Mises Theorem for BMH}
Theorem 2.1 in \cite{castillo2015bernstein} gave a general result and approach to show the BvM Theorem holds for smooth functionals in some semi-parametric models. The theorem shows that under the continuity and consistency condition, the moment generating function (MGF) of the parameter endowed with posterior distribution can be calculated approximately  through the local asymptotic normal (LAN) expansion, its convergence to an MGF of some normal random variable then can be shown under some assumptions on the statistical model.

We will show that BvM theorem holds for BMH Method via Theorem \ref{thm:4.2}.  The result also shows  that the approach given in \cite{castillo2015bernstein} can be applied  not only to simple examples, but also to relatively complicated frameworks. To prove it, we introduced Lemma \ref{lemma:prop1}, which is modified from Proposition 1 in \cite{castillo2015bernstein}, the proof of which was not given explicitly in the original paper.% We will first explicitly prove Lemma \ref{lemma:prop1}, and then prove Theorem \ref{thm:4.2} by verifying the conditions in  Lemma \ref{lemma:prop1}.

For mathematical simplicity, we assume that the true density $f_{\theta_0}$ belongs to the set $\mathscr F$, which is restricted to the space of all densities that are bounded away from $0$ and $\infty$ on $[0,1]$. As noted above, the compactness of the domain can be relaxed by considering transformations of the parameters and random variables.

To state the Lemma, we need several more notations.
Assume that the functional $T$ satisfies (\ref{eq:4.2}) with bounded efficient influence function $\tilde T_{g_0} \neq 0$, we denote $\tilde T_{g_0}$ by $\tilde T$, where $\tilde T_{[k]}$ denotes the projection of $\tilde T$ on $\mathscr H_k$. For $k\geq 1$, let

\begin{eqnarray}\label{eq:4.7}
\hat T_k&=& T(g_{0[k]})+\frac{\mathbb G_n \tilde T_{[k]}}{\sqrt n}, \qquad V_k=\|\tilde T_{[k]}\|_L^2,\nn\\
\hat T&=& T(g_0)+\frac{\mathbb G_n \tilde T}{\sqrt n}, \qquad V=\|\tilde T\|_L^2,
\end{eqnarray}
 and denote
\begin{equation}\label{eq:4.1-2}
\mathbb G_n(g)=W_n(g)=\frac{1}{\sqrt n}\sum_{i=1}^n[g(x_i)-G_0(g)].
\end{equation}

\begin{Lemma}\label{lemma:prop1}
	Let $g_0$ belong to $\mathscr G$,  the prior $\pi$ be defined as in section 2.3, and conditions (\ref{eq:4.6}, \ref{eq:4.5}, \ref{eq:4.9}) be satisfied.
	Consider estimating a  functional $T(g)$, differentiable with respect to the tangent set $\mathscr H_T:=\{q\in L^2(g_0), \int_{[0,1]} qg_0=0\}\subset \mathscr H=L^2(g_0)$, with efficient influence function $\tilde T_{g_0}$ bounded on $[0,1]$, and with $\tilde r$ defined in (\ref{eq:4.2}), for $\mathscr K_n$ as introduced in (\ref{eq:4.9}). If
	\begin{equation}\label{eq:4.10-1}
	\max_{k\in \mathscr K_n}\left | \|\tilde T_{[k]}\|_L^2 - \|\tilde T\|_L^2\right |=o_p(1),
	\end{equation}
	
	\begin{equation}\label{eq:4.10-2}
	\max_{k\in \mathscr K_n}\mathbb G_n (\tilde T_{[k]}-\tilde T)=o_p(1),
	\end{equation}
	%and, %for any $M>0$, with $A_{n,k}(M)$ defined as:
	%\begin{equation}\label{eq:4.8}
	%A_{n,k}(M)=\{g\in \mathscr H^1_k, h(g,g_{0,[k]})\leq M\epsilon_{n,k}\},
	%\end{equation}
	%where $\epsilon^2_{n,k}=\frac{k\log n}{n}$,
	%we have that
	\begin{equation}\label{eq:4.11}
	\sup_{k\in \mathscr K_n} \sup_{g\in A_{n,k}(M)} \sqrt n \tilde r(g,g_0)=o_p(1),
	\end{equation}
	for any $M>0$ and $A_{n,k}(M)$ defined as in (\ref{eq:4.8}), as $n\to \infty$,  and
	\begin{equation}\label{eq:4.12}
	\max_{k\in\mathscr K_n} \sqrt n \left |  \int (\tilde T- \tilde T_{[k]})(g-g_0)\right |=o(1),
	\end{equation}
	then the BvM theorem for the functional $T$ holds.
\end{Lemma}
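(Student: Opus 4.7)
The plan is to follow the Laplace-transform strategy of \cite{castillo2015bernstein}: it suffices to show that, for every $t$ in a neighbourhood of $0$,
$$
E_\pi\!\bigl[\exp\{t\sqrt n(T(g)-\hat T)\} \mid \mathbb{X}_n\bigr] \longrightarrow \exp(\tfrac{1}{2}t^2 V)
$$
in $P_0$-probability, and then invoke the change-of-measure step from \cite{castillo2015bernstein} to upgrade Laplace convergence to total-variation BvM. I would decompose the posterior expectation by conditioning on $k$, restrict to $k\in \mathscr K_n$ using (\ref{eq:prior2}) (which gives $\pi(\mathscr K_n\mid\mathbb{X}_n)=1+o_p(1)$), and further restrict to $g\in A_{n,k}(M)$ using (\ref{eq:prior1}); the boundedness of $\tilde T$ together with the compactness of $\Theta$ makes the omitted posterior mass contribute $o_p(1)$ to the Laplace transform.

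On $\{k\in \mathscr K_n,\, g\in A_{n,k}(M)\}$ I would linearize $T$. The expansion (\ref{eq:4.2}) together with the remainder bound (\ref{eq:4.11}) gives
$$
\sqrt n(T(g)-T(g_0)) = \sqrt n\!\int(g-g_0)\tilde T + o_p(1);
$$
assumption (\ref{eq:4.12}) replaces $\tilde T$ by $\tilde T_{[k]}$ at an $o_p(1)$ cost, and (\ref{eq:4.10-2}) together with the definition $\hat T=T(g_0)+\mathbb G_n\tilde T/\sqrt n$ replaces $\mathbb G_n\tilde T$ by $\mathbb G_n\tilde T_{[k]}$. Because $\tilde T_{[k]}$ is bin-constant, $\int(g_{0[k]}-g_0)\tilde T_{[k]}=0$, so uniformly over $k\in \mathscr K_n$ and $g\in A_{n,k}(M)$ one obtains
$$
\sqrt n(T(g)-\hat T) = \sqrt n\!\int(g-g_{0[k]})\tilde T_{[k]} - \mathbb G_n\tilde T_{[k]} + o_p(1).
$$
Writing $g=k\sum_j\omega_j \Ind_{I_j}\in \mathscr H_k^1$ converts the integral into the explicit linear functional $\sum_j \omega_j \bar T_{j,k}$, where $\bar T_{j,k}$ is the constant value of $\tilde T_{[k]}$ on $I_j$.

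The final step is to evaluate the conditional Laplace transform of this linear functional under the conjugate Dirichlet posterior $\omega\mid\mathbb{X}_n,k\sim \mathscr D(\alpha_{j,k}+N_{j,k})$, where $N_{j,k}$ is the $j$-th bin count. A second-order Taylor expansion of the Dirichlet log-MGF, together with (\ref{eq:4.9}) that caps the total Dirichlet mass at $o(\sqrt n)$, should yield
$$
E_\pi\!\bigl[\exp\{t\sqrt n(T(g)-\hat T)\} \mid \mathbb{X}_n,k\bigr] = \exp(\tfrac{1}{2}t^2 V_k)\bigl(1+o_p(1)\bigr),
$$
uniformly in $k\in \mathscr K_n$, after which (\ref{eq:4.10-1}) swaps $V_k$ for $V$. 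The principal obstacle is exactly this last step: because $|\mathscr K_n|$ can grow polynomially in $n$, ordinary second-moment bounds on the Dirichlet posterior are not enough, and one needs moderate-deviation-style control sharp enough to push the Taylor remainder through uniformly in $k$; the hypothesis (\ref{eq:4.9}) is precisely the budget that makes this possible. A secondary technical issue is handling the truncation $\{g\in A_{n,k}(M)\}$ inside the conditional expectation, which can be absorbed by writing the truncated Laplace transform as the unrestricted Dirichlet transform minus an error controlled by (\ref{eq:prior1}) and a uniform bound on the exponent.
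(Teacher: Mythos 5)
Your overall strategy coincides with the paper's: both prove convergence of the conditional Laplace transform $E_\pi[e^{t\sqrt n(T(g)-\hat T_k)}\mid \mathbb X_n,k]$ to $e^{t^2V_k/2}$ on $A_{n,k}(M)$, use (\ref{eq:4.10-1}), (\ref{eq:4.10-2}) and (\ref{eq:4.12}) to collapse the resulting normal mixture over $k\in\mathscr K_n$ to $N(0,V)$, and invoke the MGF-to-weak-convergence lemmas of \cite{castillo2015bernstein}. Your linearization step (including the observation that $\int(g_{0[k]}-g_0)\tilde T_{[k]}=0$ by bin-wise orthogonality of the projection) is sound and matches the paper's treatment of $\sqrt n(\hat T-\hat T_k)$. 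Where you genuinely diverge is the engine for the fixed-$k$ Laplace transform: the paper does \emph{not} compute a Dirichlet MGF directly. It expands the exponent via the LAN expansion of $l_n(g)-l_n(g_{0[k]})$, completes the square to rewrite it as $\tfrac{t^2}{2}\|\bar T_{(k)}\|^2_{L,k}+l_n(g_{t,k})-l_n(g_{0[k]})+o(1)$ with $g_{t,k}\propto ge^{-t\bar T_{(k)}/\sqrt n}$, and then shows the prior is approximately invariant under this exponential tilt via the explicit change of variables $\omega\mapsto\zeta$ on the simplex, with (\ref{eq:4.9}) controlling the ratio of tilted to untilted Dirichlet densities. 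That route only needs approximate shift-invariance of the prior and is the one that generalizes beyond conjugacy; your route leans entirely on conjugacy of the Dirichlet posterior, which is legitimate here and arguably more concrete.

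One caveat on your final step: the Laplace transform of a linear functional $\sum_j\omega_j\bar T_{j,k}$ of a Dirichlet vector has no elementary closed form (it is a confluent hypergeometric function of several variables), so ``a second-order Taylor expansion of the Dirichlet log-MGF'' is not directly available. To actually carry out your computation you would need either the normalized-independent-Gamma representation of the Dirichlet or an exponential tilting of the Dirichlet density --- and the latter is precisely the substitution $\zeta_j=\omega_j\gamma_j^{-1}/\sum_i\omega_i\gamma_i^{-1}$ with Jacobian $\Delta_\zeta$ that the paper performs. So when fleshed out, your argument essentially reconstructs the paper's change-of-variables step; you have correctly identified (\ref{eq:4.9}) as the condition that makes the resulting prior-ratio term $1+o(1)$, and the uniformity over the polynomially large $\mathscr K_n$ as the delicate point. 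Your remark that the truncation to $A_{n,k}(M)$ can be removed by bounding the unrestricted transform minus the truncated one is the one step I would not trust as stated: off $A_{n,k}(M)$ the exponent can be of order $\sqrt n$, and $e^{c|t|\sqrt n}\pi(A_{n,k}^c\mid\mathbb X_n,k)$ is not obviously $o(1)$ from (\ref{eq:prior1}) for small $k$; the safe device (and the one the paper uses) is to work with the posterior conditioned on $A_{n,k}(M)$ throughout and only use $\pi(A_{n,k}(M)\mid\mathbb X_n,k)\to 1$ at the level of total variation.
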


%The proof of Lemma \ref{lemma:prop1} is to verify the conditions in Theorem 2.1 in \cite{castillo2013bernstein}. To make the paper complete, we cite the Theorem and rewrite it in the notation used in this paper as follows:

%\begin{theorem}
%	\textbf{(Theorem 2.1 in \cite{castillo2013bernstein})} Consider a statistical model
%\end{theorem}

\begin{proof}
To show that BvM holds is to show that the  posterior distribution converges to a normal distribution. If we have that
\begin{eqnarray}\label{eq:normal}
%\lefteqn
{\pi[\sqrt n(T-\hat T_k)\leq z \mid \mathbb X_n]}
&=&\sum_{k\in \mathscr K_n} \pi[k \mid \mathbb X_n]\pi\left[\sqrt n(T-\hat T)\leq z+\sqrt n (\hat T-\hat T_k)\mid \mathbb X_n, k\right ]+o_p(1)\nn\\
&=&\sum_{k\in \mathscr K_n}\pi[k \mid \mathbb X_n] \Phi \left(\frac{z+\sqrt n (\hat T-\hat T_k)}{\sqrt {V_k}}\right)+o_p(1),
\end{eqnarray}
then the proof will be completed by showing that the R.H.S. of equation (\ref{eq:normal})  reduces from the mixture of normal to the target law $N(0,V)$.

By condition (\ref{eq:4.10-1}), we have that $V_k$ goes to $V$ uniformly for $k \in \mathscr K_n $. Due to the definition of $\tilde T$ and the Lemma 4 result (iii) in the supplement of \cite{castillo2015bernstein}, we have that
\begin{eqnarray*}
	\sqrt n (\hat T-\hat T_k) &=& \sqrt n\left (T(g_0)-T(g_{[k]})\right)+\mathbb G_n(\tilde T-\tilde T_{[k]}) \\
	& = & \sqrt n \int \tilde T (g_{0[k]}-g_0)+\mathbb G_n(\tilde T_{[k]}-\tilde T)+o_p(1)\\
	&=& \sqrt n \int (\tilde T- \tilde T_{[k]})(g_{0[k]}-g_0)+\mathbb G_n(\tilde T_{[k]}-\tilde T)+o_p(1).
\end{eqnarray*}
By Condition (\ref{eq:4.12}) and (\ref{eq:4.10-2}), the last line converges to $0$ uniformly for $k\in \mathscr K_n$.

Therefore,   showing that for any given $k$,  equation (\ref{eq:normal}) holds will complete the proof.  We prove this by showing that the MGF (Laplace transformation) of the posterior distribution of the parameter of interest converges to the MGF of some normal distribution, which implies that the posterior converges to the normal distribution weakly by Lemma 1 and 2 in supplement to  \cite{castillo2015bernstein} or Theorem 2.2 in \cite{Bagui2016}.

First, consider the deterministic $k=K_n$ case. We calculate the MGF as:
\begin{equation}\label{eq:I}
 {E[ e^{t\sqrt n (T(g)-\hat T (g_{0[k]}))}  \mid \mathbb X_n, A_n]}
=\frac{\int_{A_n}e^{t\sqrt{n}(T(g)-\hat T(g_{0[k]}))+{l}_n(g)-{l}_n(g_{0[k]})} d\pi(g)}
{\int_{A_n}e^{{l}_n(g)-{l}_n(g_{0[k]})} d\pi(g)},
\end{equation}
where $l_n(g)$ is the log-likelihood for given $g$ and $\mathbb X_n$. Based on the LAN expansion of the log-likelihood and the smoothness of the functional, the exponent in the numerator on the  RHS of the equation can be transformed with respect to $\bar T_{(k)}=\tilde T_{[k]}-\int \tilde T_{[k]}g_{0[k]}$,
\begin{eqnarray*}
	\lefteqn
	{t\sqrt n(T(g)-\hat T_k)+l_n(g)-l_n(g_{0[k]})}\\
	&=& t\sqrt n\left ( T(g)-T(g_{0[k]})-\frac{\mathbb G_n \tilde T_{[k]}} {\sqrt n} \right )+l_n(g)-l_n(g_{0[k]})\\
	&=&t\sqrt n\left ( \left \langle \log \frac{g}{g_{0[k]}}-\int \log \frac{g}{g_{0[k]}}g_{0[k]}, \bar T_{[k]} \right \rangle_L+\mathscr B(g,g_{0[k]})+\tilde r(g,g_{0[k]})
	-\frac{\mathbb G_n \bar T_{[k]}} {\sqrt n} \right ) \\
	&& \hspace{0.5cm} -\frac{1}{2}\left\|\sqrt n \log \frac{g}{g_{0[k]}} \right\|_L^2+W_n\left (   \sqrt n \log \frac{g}{g_{0[k]}} \right )+R_{n,k}(g,g_{0[k]}),
\end{eqnarray*}
where $\mathscr B(g,g_0)=\int_0^1[\log(g/g_0)-(g-g_0)/g_0](x)\tilde T_{g_0}(x)g_0(x)dx$.
Note that $\mathbb G_n= W_n$, add a term of $({t^2}/{2})\|\bar T_{(k)}\|_L^2$,  then re-arranging the RHS expression above we have that
\begin{eqnarray*}
	\lefteqn
	{t\sqrt n(T(g)-\hat T_k)+l_n(g)-l_n(g_{0,[k]})}\\
	&=& -\frac{n}{2}\left \| \log \frac{g}{g_{0[k]}}-\frac{t}{\sqrt n}\bar T_{(k)} \right \|_{L,k}^2
	+\sqrt nW_n \left (
	\log \frac{g}{g_{0[k]}}-\frac{t}{\sqrt n}\bar T_{(k)}
	\right )\\
	&& \hspace{0.5cm}+\frac{t^2}{2}\|\bar T_{(k)}\|_{L,k}^2+t\sqrt n \mathscr B_{n,k}+R_{n,k}(g,g_{0[k]})+\tilde r(g,g_{0[k]})\\
	&=& -\frac{n}{2}\left \| \log \frac{g e^{-\frac{t}{\sqrt n}\bar T_{(k)}}}{g_{0[k]}} \right \|_{L,k}^2
	+\sqrt nW_n \left (
	\log \frac{ge^{-\frac{t}{\sqrt n}\bar T_{(k)}}}{g_{0[k]}}
	\right )+\frac{t^2}{2}\|\bar T_{(k)}\|_{L,k}^2\\
	&&\hspace{0.5cm}+t\sqrt n \mathscr B_{n,k}+R_{n,k}(g,g_{0[k]})+\tilde r(g,g_{0[k]}).
\end{eqnarray*}
This is because the cross term in calculating the first term in the second line above is equal to the inner product term in the equation above it.

Let $g_{t,k}=ge^{-\frac{t}{\sqrt n}\bar T_{(k)}}/ G e^{-\frac{t}{\sqrt n}\bar T_{(k)}}$, the RHS  of the above equation can be written as
\begin{equation}\label{eq:power}
\frac {t^2}{2}\|\bar T_{(k)}\|_{L,k}^2 +l_n(g_{t,k})-l_n(g_{0[k]})+o(1).
\end{equation}
Substituting the corresponding terms on  the RHS of equation (\ref{eq:I}) by (\ref{eq:power}), we have that
\begin{equation}\label{eq:28}
E[ e^{t\sqrt n (T(g)-\hat T (g_{0[k]}))}  \mid \mathbb X_n, A_n] = e^{(t^2/2)\|\bar T_{(k)}\|_{L,k}^2+o(1)}\times
\frac{\int_{A_{n,k}} e^{l_n(g_{t,k})-l_n(g_{0[k]})} d\pi_k(g) }{\int_{A_{n,k}} e^{l_n(g)-l_n(g_{0[k]})} d\pi_k(g)}.
\end{equation}
Notice that the integration in the denominator of the second term is an expectation based on a Dirichlet distribution on $\omega$ as described in (\ref{eq:4.5}), and that $g_{t,k}=k\sum_{j=1}^k\zeta_j\Ind_{I_j}$, where
\begin{equation}\label{eq:6.6}
\zeta_j=\frac{\omega_j\gamma_j^{-1}}{\sum_{j=1}^k\omega_j\gamma_j^{-1}},
\end{equation}
with $\gamma_j=e^{t\bar T_j/\sqrt n}$ and $\bar T_j:= k\int_{I_j}\bar T_{(k)}$. Let $S_{\gamma^{-1}(\omega)}= \sum _{j=1}^k\omega_j\gamma_j^{-1}$, by (\ref{eq:6.6}) we have $S_{\gamma}^{-1}(\zeta)=S_{\gamma^{-1}}(\omega)$. Now using these notations,
\begin{eqnarray}\label{eq:trans}
\frac{\int_{A_{n,k}} e^{l_n(g_{t,k})-l_n(g_{0[k]})} d\pi_k(g) }{\int_{A_{n,k}} e^{l_n(g)-l_n(g_{0[k]})} d\pi_k(g)}
&=&\frac{\int_{A_{n,k}} e^{l_n(g_{t,k})-l_n(g_{0[k]})}  \prod_{j=1}^k \omega_j^{\alpha_{j,k}-1}/B(\alpha) d \omega }{\int_{A_{n,k}} e^{l_n(g)-l_n(g_{0[k]})} \prod_{j=1}^k \omega_j^{\alpha_{j,k}-1}/B(\alpha) d \omega} \\
& = &
\frac{\int_{A_{n,k}} e^{l_n
		(k\sum_{j=1}^k \frac{\omega_j\gamma_j^{-1}}{\sum_{j=1}^k\omega_j\gamma_j^{-1}} \Ind_{I_j})
		-l_n(g_{0[k]})}  \prod_{j=1}^k \omega_j^{\alpha_{j,k}-1} d \omega }{\int_{A_{n,k}} e^{l_n
		(k\sum_{j=1}^k\omega_j\Ind_{I_j})
		-l_n(g_{0[k]})} \prod_{j=1}^k \omega_j^{\alpha_{j,k}-1} d \omega}\nn\\
&=& \frac{\int_{A_{n,k}} e^{l_n
		(k\sum_{j=1}^k \zeta_j \Ind_{I_j})
		-l_n(g_{0[k]})} \Delta_{\zeta} \prod_{j=1}^k
	[\gamma_j \zeta_jS_{\gamma}^{-1}(\zeta)]^
	{\alpha_{j,k}-1}  d \zeta }
{\int_{A_{n,k}} e^{l_n
		(k\sum_{j=1}^k\omega_j\Ind_{I_j})
		-l_n(g_{0[k]})} \prod_{j=1}^k \omega_j^{\alpha_{j,k}-1} d \omega}, \nn
\end{eqnarray}
where  $\Delta_{\zeta}=S_{\gamma}^{-k}(\zeta)\prod_{j=1}^k\gamma_j$ is the Jacobian of the change of variable, $(\omega_1,\ldots,\omega_{k-1})\to (\zeta_1,\ldots,\zeta_{k-1})$,  which is given in Lemma 5 in supplement of \cite{castillo2015bernstein}, and $B(\alpha)=\prod_{i=1}^k \Gamma(\alpha_i)/\Gamma(\sum_{i=1}^k \alpha_i)$ is the constant for normalizing Dirichlet distribution.

Notice that over the set $A_{n,k}$,
\begin{eqnarray}\label{eq:trans2}
%\lefteqn
{\prod_{j=1}^k[\gamma_jS_{\gamma}^{-1}(\zeta)]^{\alpha_{j,k}-1}\Delta_{\zeta}}
&=&S_{\gamma}(\zeta)^{-\sum_{j=1}^{k}\alpha_{j,k}}\gamma_j ^{\sum_{j=1}^{k}\alpha_{j,k}} \nn \\
& = & S_{\gamma}(\zeta)^{-\sum_{j=1}^{k}\alpha_{j,k}}e^{t\sum_{j=1}^{k}a_{j,k}\bar T_j/\sqrt n}\nn\\
&=&e^{t\sum_{j=1}^{k}\alpha_{j,k}\bar T_j/\sqrt n} \left (
1-\frac{t}{\sqrt n}\int_0^1 \bar T_{(k)}(g-g_0)+O(n^{-1})
\right ) ^{\sum_{j=1}^{k}\alpha_{j,k}},
\end{eqnarray}
%we can apply dominated convergence theorem (DCT) to show that the above converges to 1, since integrable.  To achieve this, we show that
%\[
%\prod_{j=1}^k[\gamma_j\zeta_jS_{\gamma}^{-1}(\zeta)]^{\alpha_{j,k}-1}\Delta_{\zeta}\to \prod_{j=1}^k \omega_j^{\alpha_{j,k}-1}.
%\]
%We have that
%\begin{eqnarray*}
%\lefteqn{\prod_{j=1}^k[\gamma_j\zeta_jS_{\gamma}^{-1}(\zeta)]^{\alpha_{j,k}-1}\Delta_{\zeta}/\omega_j^{\alpha_{j,k}-1}}\\
%&=&  \prod_{j=1}^k \omega_j ^{\alpha_{j,k}-1}   \\
%&=& ,
%\end{eqnarray*}
since
\[
S_{\gamma^{-1}}(\omega) =\int_0^1e^{-t\bar T_{(k)}(x)/\sqrt n} g_{[k]}(x)dx= 1-\frac{t}{\sqrt n}\int_0^1\bar T_{(k)}(g_{[k]}-g_0)+O(n^{-1})
\]
by Taylor's expansion.
Expression (\ref{eq:trans2}) converges to $1$ under the condition (\ref{eq:4.9}) and  hence expression (\ref{eq:trans})
converges to
\begin{equation}\label{eq:=1}
\frac{\int_{A_{n,k}} e^{l_n
		(k\sum_{j=1}^k \zeta_j \Ind_{I_j})
		-l_n(g_{0[k]})}  \prod_{j=1}^k
	\zeta_j^
	{\alpha_{j,k}-1}/B(\alpha_k)  d \zeta }
{\int_{A_{n,k}} e^{l_n
		(k\sum_{j=1}^k\omega_j\Ind_{I_j})
		-l_n(g_{0[k]})} \prod_{j=1}^k \omega_j^{\alpha_{j,k}-1}/B(\alpha_k) d \omega}
\end{equation}
since, when $\|\omega-\omega_0\|_1\leq M\sqrt{k\log n}/\sqrt n$,
$$
\|\zeta-\omega_0\|_1\leq \|  \omega-\omega_0 \|_1+ \| \omega-\zeta  \|_1 = \frac{M\sqrt{k\log n}+2|t|\|\tilde T\|_{\infty}}{\sqrt n}\leq (M+1)\frac{\sqrt{k\log n}}{\sqrt n}
$$
and vice versa,
when $\|\zeta-\omega_0\|_1\leq M\sqrt{k\log n}/\sqrt n$,
$$
\|\omega-\omega_0\|_1\leq \|  \omega-\zeta \|_1+ \| \omega_0-\zeta  \|_1 = \frac{M\sqrt{k\log n}+2|t|\|\tilde T\|_{\infty}}{\sqrt n}\leq (M+1)\frac{\sqrt{k\log n}}{\sqrt n}.
$$
Choosing $M$ such that
\begin{equation}\label{eq:end}
\pi\left [ \| \omega-\omega_0  \|_1\leq (M+1)\sqrt{k\log n}\mid \mathbb X_n, k\right ]
=1+o_p(1),
\end{equation}
expression (\ref{eq:=1}) equals to $1+o_p(1)$.
Notice that  $\|\bar T_{(k)} \|_{L,k} = \|\tilde T_{[k]}\|_L$, we have that
\begin{equation}\label{eq:6.7}
E^{\pi}\left[e^{t\sqrt n(T(g)-\hat T_k)}\mid \mathbb X_n,A_{n,k}\right]=e^{t^2\|\tilde T_{[k]}\|_L^2}\left(1+o_p(1)\right )
\end{equation}
which completes the proof for fixed $k$ case.

For random $k$ case, the proof will follow the same steps as the corresponding part in the proof for Theorem 4.2 in \cite{castillo2015bernstein}. For completeness, we briefly sketch  the proof here. Since $k$ is not fixed,  we will calculate $E^{\pi}[e^{t\sqrt n(T(f)-\hat T_k)}\mid \mathbb X_n ]$ on $B_n=\bigcup_{1\leq k\leq n}A_{n,k}\bigcap \{f=f_{\omega, k}, k\in \mathscr K_n\}$. Consider $\mathscr K_n$  a subset of $\{1,2,\ldots, n/\log^2 n\}$ such that $\pi(\mathscr K_n\mid \mathbb X_n)=1+o_p(1)$ by the concentration property (a) of the random histogram, we have that $\pi[B_n\mid \mathbb X_n]=1+o_p(1)$. We rewrite the L.H.S of (\ref{eq:6.7}) as $E^{\pi}[e^{t\sqrt n(T(f)-\hat T_k)}\mid \mathbb X_n,B_{n,k}]$ which is also equal to $e^{t^2\|\tilde T_{[k]}\|_L^2}(1+o_p(1))$. Notice that $o(1)$ in this expression is uniform in $k$. This is because it holds in the proof for deterministic case for any given $k<n$. Therefore,
\begin{eqnarray*}
	E^{\pi}\left[e^{t\sqrt n(T(f)-\hat T)}\mid \mathbb X_n,B_{n}\right]
	&=&\sum_{k\in\mathscr K_n}E^{\pi}\left[e^{t\sqrt n(T(f)-\hat T_k)+\hat T_k)-\hat T)}\mid \mathbb X_n,A_{n,k},k\right]\pi[k\mid \mathbb X_n]\\
	&=&(1+o(1))\sum_{k\in\mathscr K_n} e^{t^2V_k/2+t\sqrt n(\hat T_k-\hat T)}\pi[k\mid \mathbb X_n].
\end{eqnarray*}
Using (\ref{eq:4.10-2}) and (\ref{eq:4.12}) together with the continuous mapping theorem for the exponential function yields that the last display converges in probability to $e^{t^2V/2}$ as $n\to \infty$ which completes the proof.
\end{proof}

%
%Let $\mathbb X_n$ be independent and identically distributed, having density $f$ with respect to Lebesgue measures as stated at the beginning of this paper. In this section, we consider only the density functions on the interval $[0,1]$ and bounded away from $0$ and infinity, which is to avoid the mathematical complexity without loss of too much generality. for statistical problems, to apply the result to the density functions on open supports and/or the functions are not bounded away from $0$ and infinity, we can always work on the closed enough approximation.

The following theorem shows that Method 2 is efficient, the proof which is to verify the conditions in above lemma are satisfied.

\begin{Theorem}\label{thm:4.2}
	Suppose $g_0\in \mathscr C^{\beta}$ with $\beta>0$. Let two priors $\pi_1$ and $\pi_2$ be defined by (\ref{eq:4.5})-(\ref{eq:4.6}) and prior on $k$ be either a Dirac mass at $k=K_n=n^{1/2}(\log n)^{-2}$ for $\pi_1$ or $k\sim \pi_k$ given by (\ref{eq:4.6}) for $\pi_2$. Then for all $\beta>1/2$, the BvM holds for $T(f)$ for both $\pi_1$ and $\pi_2$.
\end{Theorem}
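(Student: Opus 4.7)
The plan is to verify the four hypotheses \eqref{eq:4.10-1}, \eqref{eq:4.10-2}, \eqref{eq:4.11}, \eqref{eq:4.12} of Lemma~\ref{lemma:prop1}, which then delivers the BvM conclusion. The simplest of these is \eqref{eq:4.11}: Theorem~\ref{lem:tildeT} established the expansion \eqref{eq:4.2} with remainder $\tilde r \equiv 0$, so $\sqrt n\tilde r(g,g_0)=0$ identically. The remaining three conditions are approximation-theoretic statements about how well the histogram projection $\tilde T_{[k]}$ approximates the efficient influence function $\tilde T_{g_0}$, coupled with the posterior concentration facts \eqref{eq:prior1}--\eqref{eq:prior2} for the random histogram prior.

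For the $L_2$-approximation rate, I would first argue that $\tilde T_{g_0}$ inherits H\"older smoothness of the same order $\beta$ as $g_0$: from the explicit formula in Theorem~\ref{lem:tildeT}, $\tilde T_{g_0}$ is a product of the (smooth, parametric) quantity $\dot s_{T(g_0)}/2$ with $g_0^{-1/2}$, and since $g_0$ is bounded away from $0$ and $\infty$ on $[0,1]$, $g_0^{1/2}$ and $g_0^{-1/2}$ remain in $\mathscr C^\beta$. Standard results on projections onto regular $k$-bin histograms then give $\|\tilde T - \tilde T_{[k]}\|_{L^2}=O(k^{-\beta})$ and $\|\tilde T-\tilde T_{[k]}\|_\infty = O(k^{-\beta})$ for $\beta\leq 1$.

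With this in hand, the verifications proceed as follows. For \eqref{eq:4.10-1}, since $g_0$ is bounded, the $L$-norm is comparable to $L^2$, so $\bigl|\|\tilde T_{[k]}\|_L^2-\|\tilde T\|_L^2\bigr|\lesssim \|\tilde T-\tilde T_{[k]}\|_{L^2}\,(\|\tilde T\|_{L^2}+\|\tilde T_{[k]}\|_{L^2})=O(k^{-\beta})$, which is $o(1)$ uniformly over $k\in\mathscr K_n\subset\{1,\dots,\lfloor n/(\log n)^2\rfloor\}$. For \eqref{eq:4.10-2}, the centered empirical process has variance $\mathrm{Var}_{g_0}(\mathbb G_n(\tilde T_{[k]}-\tilde T))\le \|\tilde T_{[k]}-\tilde T\|_{L^2(g_0)}^2=O(k^{-2\beta})$; since $\tilde T$ and $\tilde T_{[k]}$ are uniformly bounded, Bernstein's inequality plus a union bound over the polynomially many integers in $\mathscr K_n$ yields the uniform bound $\max_{k\in\mathscr K_n}|\mathbb G_n(\tilde T_{[k]}-\tilde T)|=o_p(1)$. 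For \eqref{eq:4.12}, Cauchy--Schwarz gives
\begin{equation*}
\sqrt n\Bigl|\int(\tilde T-\tilde T_{[k]})(g-g_0)\Bigr|\;\lesssim\;\sqrt n\,\|\tilde T-\tilde T_{[k]}\|_{L^2}\,\|g-g_0\|_{L^2}\;=\;O\bigl(\sqrt n\,k^{-\beta}\epsilon_n(\beta)\bigr),
\end{equation*}
where $\|g-g_0\|_{L^2}\lesssim\epsilon_n(\beta)$ on the posterior concentration set from \eqref{eq:prior2}. With $k_n(\beta)=(n\log n)^{1/(2\beta+1)}$ and $\epsilon_n(\beta)=k_n(\beta)^{-\beta}$, this bound is of order $\sqrt n\,(n\log n)^{-2\beta/(2\beta+1)}$, whose exponent on $n$ equals $(1-2\beta)/(2(2\beta+1))$; this is strictly negative precisely when $\beta>1/2$, giving $o(1)$ as required.

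The main obstacle is the tight coupling in condition \eqref{eq:4.12}: both the projection error and the posterior concentration rate must conspire against the $\sqrt n$ scaling, and this is precisely where the threshold $\beta>1/2$ enters (it is the usual semiparametric no-bias boundary). A secondary technical point is controlling the $\mathbb G_n$-term in \eqref{eq:4.10-2} \emph{uniformly} in $k$; care is needed to ensure that, for the random-$k$ prior $\pi_2$, the union bound is applied on a set of $k$ with total posterior mass $1-o_p(1)$, which is precisely what the consistency statement $\pi(\mathscr K_n\mid \mathbb X_n)=1+o_p(1)$ following \eqref{eq:prior2} provides. Once all four hypotheses are verified, Lemma~\ref{lemma:prop1} immediately yields the BvM statement in Theorem~\ref{thm:4.2} for both priors $\pi_1$ and $\pi_2$.
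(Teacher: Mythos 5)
Your overall strategy is the same as the paper's: verify the four hypotheses of Lemma \ref{lemma:prop1} and let that lemma deliver the conclusion for both priors. The treatment of (\ref{eq:4.11}) is identical ($\tilde r\equiv 0$ from (\ref{eq:4.2})), and your verifications of (\ref{eq:4.10-1}) and (\ref{eq:4.12}) via the H\"older smoothness of $\tilde T_{g_0}$ and Cauchy--Schwarz are in the same spirit as the paper's direct computation and its Lemma \ref{lemma:balance}; for (\ref{eq:4.10-2}) you substitute Bernstein's inequality plus a union bound where the paper invokes the maximal inequality of Lemma 19.33 in \cite{van2000asymptotic}. These are essentially cosmetic differences.

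There is, however, one concrete gap: the claimed uniformity over $k\in\mathscr K_n$. A bound of order $k^{-\beta}$ is \emph{not} $o(1)$ uniformly over $k\in\{1,\dots,\lfloor n/(\log n)^2\rfloor\}$; at bounded $k$ it is of constant order, and condition (\ref{eq:4.10-2}) genuinely fails there -- for $k=1$, $\tilde T_{[1]}$ is the constant $\int_0^1\tilde T$, so $\mathbb G_n(\tilde T_{[1]}-\tilde T)=-\mathbb G_n\tilde T$ has nondegenerate limiting variance $\|\tilde T\|_L^2>0$. Likewise your bound for (\ref{eq:4.12}) becomes $\sqrt n\,\epsilon_n(\beta)\to\infty$ for bounded $k$. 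What rescues the argument is that $\mathscr K_n$ is not the full integer range but only the set of $k$ carrying non-negligible posterior mass: by (\ref{eq:prior2}) any such $k$ admits a $k$-bin histogram $g_k^*$ with $h(g_0,g_k^*)\leq M\epsilon_n(\beta)$, whence $\|g_0-g_{0[k]}\|_2\leq\|g_0-g_k^*\|_2\lesssim\epsilon_n(\beta)$, and (by your smoothness argument for $\tilde T$) the projection errors of $\tilde T$ are controlled at the same rate -- in effect $k$ is bounded below so that $k^{-\beta}\lesssim\epsilon_n(\beta)$. This is exactly the device the paper's Lemma \ref{lemma:balance} uses (``by definition there exists $g_k^*\in\mathscr H_k^1$ with $h(g_0,g_k^*)\leq M\epsilon_n(\beta)$''), and you need to make it explicit \emph{before} taking your maxima over $k$; your closing remark about restricting to a set of $k$ with posterior mass $1+o_p(1)$ gestures at this, but only for the purpose of discarding $k\notin\mathscr K_n$, not for obtaining the quantitative lower bound on $k$ that all three of your $k$-dependent estimates require.
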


%Before we prove the theorem above, we need the following Lemma, which is modified from Proposition 1  in \cite{castillo2013bernstein}. Since this result has not been proved explicitly  in \cite{castillo2013bernstein}, we will sketch the proof after we state the Lemma.

\begin{proof}%Proof of Theorem \ref{thm:4.2}.
%The first part of the proof is to show that the posterior distribution on random histograms concentrates (a) given the number of bins $k$, around the projection $f_{0[k]}$ of $f_0$ and (b) globally around $f_0$ in terms of Hellinger distance.
%
%More precisely, (a) there is $c, M>0$ such that
%\begin{equation}
%P_0[\exists k\leq \frac{n}{\log n}; \Pi[f\notin A_{n,k}(M)\mid \mathbb X_n, k]>e^{-ck\log n}]=o(1),
%\end{equation}
%where $A_{n,k}(M)= \{f\mathscr H_k^1, h(f,f_{0[k]})\leq M\epsilon_{n,k}\}$, and %$\epsilon_{n,k}^2={k\log n}/{n}$.
%
%
%(b) Suppose $f_0\in \mathscr C^{\beta}$ with $0<\beta<1$. If $k_n(\beta)=(n/\log n)^{1/(2\beta+1)}$ %and $\epsilon_n(\beta)k_n(\beta)^{-\beta}$, then for sufficiently large  $k_1, M$,
%\begin{equation}
%\Pi[h(f_0,f)\leq M\epsilon_n(\beta); k\leq k_1k_n(\beta)\mid \mathbb X_n]=1+o_p(1).
%\end{equation}
%Note that these results are not related to $T(\cdot)$, and is proved in supplementary material of  \cite{castillo2013bernstein}. We do not repeat them here.

%Now to verify the other conditions in Lemma \ref{lemma:prop1}, we need to show that for $\mathscr K_n$, such that $\Pi(\mathscr K_n\mid \mathbb X_n)=1+o_p(1)$, and for which
%\begin{equation}\label{eq:6.4}
%\sup_{k \in \mathscr K_n} \sqrt n |\hat T- \hat T_k| =o_p(1)       \qquad \sup_{k \in \mathscr %K_n}|V_k-V|=o_p(1)
%\end{equation}
For $T(f)$ such that  (\ref{eq:4.2}) is satisfied, condition (\ref{eq:4.11}) is satisfied obviously.
%\begin{equation}\label{eq:6.5}
%\sup_{k \in \mathscr K_n} \sup_{f\in A_n,k(M)} \sqrt n \tilde r(f,f_0)=o_p(1)
%\end{equation}
%By constructing $T$ as in equation (\ref{eq:4.2}), condition (\ref{eq:6.5}) is satisfied obviously.

%{\tiny
%In the random $k$ case
%$$
%\mathscr K_n=\{k\in[1, k_1k_n(\beta)], \exists f\in \mathscr H_k^1, h(f, f_0)\leq %M\epsilon_n(\beta)\},
%$$
%for some $k$, $M$ large enough such that $\Pi[\mathscr K_n\mid \mathbb X_n]=1+o_p(1)$, from (a), with $\epsilon_n(\beta)=(n/\log n)^{-\beta/(2\beta+1)}$. For $\beta>1/2$, note that $k\epsilon^2_{n,k}=k^2\log n/n$, $\epsilon^2_n(\beta)=(\log n/n)^{2\beta/(2\beta+1)}$ and $k \lesssim k_n(\beta)=(n/\log n)^{1/(2\beta+1)}$, and therefore, $k\epsilon_{n,k}^2\lesssim k\epsilon_n^2 (\beta)= o(1)$, uniformly over $k\lesssim k_n(\beta)$.

%Now for $k\in \mathscr K_n$, the elements of the set $\{f\in \mathscr H_k^1, h(f,f_0)\leq %M\epsilon_n(\beta)\}$ are bounded away from $0$ and $\infty$. Writing the Hellinger %distance as a sum over the various bins, we have $f(x)>C_3/2$, where $C_3$ is the %minimum of $f_0(x)$. Similarly, $\|f(x)\|_{\infty}<2\|f_0(x)\|_{\infty}$.
%}

%For the first equation in (\ref{eq:6.4}) we have that
%$$
%\sqrt{n}(\hat T _k- \hat T)=\sqrt {n} (T(f_{0[k]})-T(f_0)) + \mathbb G_n(\tilde T_{[k]}-\tilde{T}).
%$$
For equation (\ref{eq:4.10-2}), the empirical process $\mathbb G_n(\tilde T_{[k]}-\tilde{T})$ is controlled and will converge to $0$ by applying Lemma 19.33 in \cite{van2000asymptotic}.

% We also have that $\sqrt {n} (T(f_{0[k]})-T(f_0))=\sqrt {n} \int (\tilde T-\tilde T_{[k]})(f_{0[k]}-f_0)+o(1)$, by Lemma 4 in Appendix 3 in \cite{Castillo13}. Therefore, showing that (\ref{eq:4.12}) is satisfied will make the first equation in (\ref{eq:6.4}) hold, which will be given in the lemma below.
Condition (\ref{eq:4.12}) is satisfied by Lemma \ref{lemma:balance} below.

Now we  show that  equation (\ref{eq:4.10-1}) holds: %By the definition of $V_k$ and $V$ in (\ref{eq:4.7}),  it is  to show that $V_k\to V$.

\begin{eqnarray*}
	\|\tilde T_f\|_L^2-\|\tilde T_{[k]}\|_L^2
	&\leq & \left|\int \dot s_{T(f)}(x)dx-\int \frac{\dot s_{T(f_{[k]}(x))}f(x)}{f_{[k]}(x)}dx  \right | \\
	 & \lesssim & \left|  \int \dot s_{T(f)}(x)f_{[k]}(x)-\dot s_{T(f[k])}(x)f(x)  \right |\\
	&= & \left |  \int \dot s_{T(f_{[k]})}(x)[f_{[k]}(x)-f(x)] \right | \\
	& \lesssim &  \int |f_{[k]}(x)-f(x)|dx.
\end{eqnarray*}
The last equality is based on conclusion (3) in Lemma 4 in \cite{Castillo13}, and the last inequality is due to the assumption that $\tilde T$ is bounded. Then the last term is controlled by $h(f,f_n)$, which completes the proof. \\
\end{proof}

\begin{Lemma}\label{lemma:balance}
	Under the same conditions as in Theorem \ref{thm:4.2}, 	equation (\ref{eq:4.12}) holds.
\end{Lemma}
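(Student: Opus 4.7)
The plan is to reduce (\ref{eq:4.12}) to a Cauchy--Schwarz estimate in $L_2([0,1])$, and then invoke the classical approximation rate for the regular $k$-bin histogram projection. Writing $P_k$ for the $L_2$-projection onto $\mathscr H_k$, we have $\tilde T_{[k]}=P_k\tilde T$ and $g_{0[k]}=P_k g_0$. From the way (\ref{eq:4.12}) is used inside the proof of Lemma \ref{lemma:prop1}, the function that plays the role of $g$ is $g_{0[k]}$, so the quantity to control is
\[
\sqrt n\Bigl|\int (\tilde T-\tilde T_{[k]})(g_{0[k]}-g_0)\Bigr|
\;\le\;\sqrt n\,\|\tilde T-\tilde T_{[k]}\|_2\,\|g_0-g_{0[k]}\|_2.
\]

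For the second factor I would quote the standard approximation bound for a $\beta$-H\"older function on $[0,1]$ projected onto a regular $k$-bin histogram: $\|g_0-g_{0[k]}\|_2\lesssim k^{-\beta}$ for $g_0\in\mathscr C^{\beta}$, $0<\beta\le 1$. This is the rate already used to produce the contraction rate $\epsilon_n(\beta)$ in (\ref{eq:prior2}). For the first factor I would argue the same bound $\|\tilde T-\tilde T_{[k]}\|_2\lesssim k^{-\beta}$ by showing that $\tilde T$ itself lies in $\mathscr C^{\beta}$. From the explicit formula (\ref{eq:efficientfunc2}), $\tilde T_{f_{\theta_0}}$ is a constant multiple of $\dot s_{\theta_0}/s_{\theta_0}$; combining the smoothness of $\dot s_{\theta_0}$ and $s_{\theta_0}$ granted by (\ref{eq:2.5})--(\ref{eq:2.6}) with the blanket assumption that densities in $\mathscr F$ are bounded away from $0$ and $\infty$ on $[0,1]$, one obtains that the ratio inherits the same H\"older exponent as $f_{\theta_0}=g_0$.

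Combining these two factors produces
\[
\sqrt n\Bigl|\int (\tilde T-\tilde T_{[k]})(g_{0[k]}-g_0)\Bigr|\;\lesssim\;\sqrt n\,k^{-2\beta}.
\]
On $\mathscr K_n$, which by the concentration statement following (\ref{eq:prior2}) has posterior mass asymptotic to one and is effectively of order $k_n(\beta)=(n\log n)^{1/(2\beta+1)}$, we can bound $k$ from below by a constant multiple of $k_n(\beta)$, and a direct computation gives
\[
\sqrt n\,k_n(\beta)^{-2\beta}
\;=\;n^{(1-2\beta)/(2(2\beta+1))}(\log n)^{-2\beta/(2\beta+1)},
\]
which is $o(1)$ exactly when $\beta>1/2$, the standing assumption of Theorem \ref{thm:4.2}. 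Since the bound is monotone in $k$ (larger $k$ helps), this also gives the required uniformity over $\mathscr K_n$.

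The step I expect to be the main obstacle is the H\"older regularity of $\tilde T$. The efficient influence function involves $g_0^{1/2}$ in the denominator in the general form (\ref{eq:efficientfunc}), so one must trade explicitly on the assumption that $g_0$ is bounded away from $0$ and on the regularity of $\dot s_\theta$ to transfer the $\mathscr C^{\beta}$ regularity of $g_0$ to $\tilde T$. A subsidiary obstacle is that $\mathscr K_n$ is only qualitatively defined as a posterior-concentrating set; one needs that its elements are comparable to $k_n(\beta)$ from both sides for the uniform $o(1)$ rate to hold, which the concentration result (\ref{eq:prior2}) supplies up to a constant factor.
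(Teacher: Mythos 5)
Your proposal is correct and follows essentially the same route as the paper: a Cauchy--Schwarz bound on $\int(\tilde T-\tilde T_{[k]})(g_0-g_{0[k]})$, the $k^{-\beta}$ histogram-approximation rate applied to both factors (with the regularity of $\tilde T$ read off from its explicit formula and the boundedness of $g_0$ away from $0$), and the computation that $\sqrt n\,k^{-2\beta}=o(1)$ precisely when $\beta>1/2$, handled separately for the deterministic $k=K_n$ and the random-$k$ cases. The subtleties you flag (transferring H\"older regularity to $\tilde T$, and the two-sided comparability of $k\in\mathscr K_n$ with $k_n(\beta)$) are real but are also left implicit in the paper's own, terser argument.
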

\begin{proof} Since $\tilde T = \left ( -\left [\int  \ddot s_{T(g_0)}(x) g_0^{\frac{1}{2}}(x)dx\right ] ^{-1}   +a_t\right)
\frac{\dot s_{T(g_0)}(x)  } {2g_0^{\frac{1}{2}}(x)}
$, under the deterministic $k$-prior with $k=K_n=n^{1/2}(\log n)^{-2}$ and $\beta>1/2$,
$$
\left | \int (\tilde T-\tilde T_{[k]}) (g_0-g_{0[k]}) \right | \lesssim h^2(g_0,g_{0[k]})=o(1/\sqrt {n}).
$$
For the random $k$-prior, since we restrict $g$ to be bounded from above and below, so the Hellinger and $L^2$-distances considered are comparable. For given $k\in \mathcal K_n$, by definition there exists $g^*_k\in \mathcal H_k^1$ with $h(g_0, g^*_k)\leq M\epsilon_n(\beta)$, and hence,
$$
h^2(g_0,g_{0[k]})\lesssim \int (g_0-g_{0[k]})^2\leq \int (g_0-g^*_k)^2\lesssim h^2(g_0, g_k^*)\lesssim \epsilon_n^2(\beta),
$$
which completes the proof. \end{proof}

\section{Robustness properties}
In frequentist analysis, robustness is usually measured by the influence function and breakdown point of estimators.  These have been used to study robustness in minimum Hellinger distance estimators in \cite{Beran77} and in more general minimum disparity estimators in \cite{ParkBasu04} and \cite{Hooker11}.

In Bayesian inference, robustness is labeled ``outlier rejection'' and is studied under the framework of ``theory of conflict resolution''. There is large literature on this topic, {\em e.g.} \cite{deFinetti61}, \cite{Ohagan79}, and \cite{Ohagan90}.
%\cite{deFinetti61} described how outlier rejection could take place quite naturally in Bayesian context. He did not demonstrate formally that such behavior would happen, but described how the posterior distribution would  be influenced less and less by more and more distant outlying observations. Eventually, as the separation between the outliers and the remainder of the observations approached infinity, their influence on the posterior distribution would become negligible, which is a rejection of the outliers. \cite{deFinetti61} gave conditions on the model distribution and the prior distribution which ensure that the posterior expectation of a given function tends to its prior expectation. Such ignorability of extreme outliers is regardless of prior information. \cite{Ohagan79} generalized the \cite{dawid73}'s work, and introduced the concept of outlier-proneness. His result can be easily extended to a more general cases, where the observed data is considered as several subgroups, by applying the concept ``credence'' introduced by \cite{Ohagan90}.
While \cite{Ohagan90}'s results are only about symmetric distributions while \cite{desgagne07} gave corresponding results covering a wider class of distributions with tails in the general exponential power family. These results provided  a complete theory for the case of many observations and a single location parameter.
%Unfortunately, there are only limited results for Bayesian hierarchical models. \cite{angers91} proved that outlier rejection occurs in some particular hierarchical models when model for the mid level random variables is Cauchy distribution and the model for the observations given mid level random variables is normal. \cite{choy97} gave numerical examples of the same behavior when Cauchy distribution replaced by some other heavy-tailed distributions.

We examine the behavior of methods MHB and BMH under a mixture model for gross errors. Let $\delta _z$ denote the uniform density of the interval $(z-\epsilon, z+\epsilon)$, where $\epsilon>0$  is small, and let $f_{\theta, \alpha, z}=(1-\alpha)f_{\theta}+\alpha \delta_z$, where $\theta\in \Theta$ and $\alpha\in [0,1)]$ and $z$ is a real number. The density $f_{\theta, \alpha,z}$ models a situation, where $100(1-\alpha)\%$ observations are distributed from $f_{\theta}$ and $100\alpha\%$  of the observations are the gross errors located near $z$.

\begin{Theorem}
	For every $\alpha\in (0,1)$, every $\theta\in \Theta$, denote the mixture model for  gross errors by $f_{\theta,\alpha, z}$, we have that $lim_{z\to \infty} lim_{n\to \infty}T(g_n^*)=\theta$, under the assumptions of Theorem \ref{thm:2}; and that for the BMH method, $\pi(T(g) \mid \mathbb X_n)\to \phi(\theta, \|\tilde T_{f_{\theta, \alpha, z}}\|_L^2)$ in distribution as $n\to \infty$ and $z\to \infty$, where $\phi$ denotes the probability function of normal distribution, when conditions in Theorem \ref{thm:4.2} are satisfied.
\end{Theorem}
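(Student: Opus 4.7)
The plan is to reduce both assertions to a single geometric fact: for any fixed $t \in \Theta$,
\[
\int f_t^{1/2}(x)\,\delta_z^{1/2}(x)\,dx \longrightarrow 0 \quad \text{as } |z| \to \infty,
\]
since the supports of $f_t$ and $\delta_z$ eventually separate and $f_t$ has integrable tails. Consequently, for every $t$,
\[
\|f_t^{1/2}-f_{\theta,\alpha,z}^{1/2}\|^2 \;=\; \|f_t^{1/2}-\sqrt{1-\alpha}\,f_\theta^{1/2}\|^2 + \alpha + o(1),
\]
as $z \to \infty$, and by Cauchy--Schwarz ($\int f_t^{1/2}f_\theta^{1/2}\,dx \le 1$, with equality iff $t=\theta$) the limiting objective is uniquely minimized at $t=\theta$. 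Hence $T(f_{\theta,\alpha,z})\to \theta$ as $z\to\infty$.

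For the MHB claim, I treat $f_{\theta,\alpha,z}$ as the true data-generating density and apply Theorem~\ref{thm:1}: under posterior consistency, $g_n^\ast\to f_{\theta,\alpha,z}$ in Hellinger distance, and by continuity of $T$ (Beran 1977, Theorem 1) this gives $T(g_n^\ast)\to T(f_{\theta,\alpha,z})$ in $P_{f_{\theta,\alpha,z}}$-probability as $n\to\infty$. Taking the outer limit $z\to\infty$ together with the convergence of $T(f_{\theta,\alpha,z})$ established above closes the first assertion.

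For the BMH claim, I plan to invoke Theorem~\ref{thm:4.2} under the same substitution $g_0 = f_{\theta,\alpha,z}$. This yields a Bernstein--von Mises statement whose centering is $\hat T = T(f_{\theta,\alpha,z}) + O_p(n^{-1/2})$ and whose asymptotic variance is $\|\tilde T_{f_{\theta,\alpha,z}}\|_L^2$. Since $T(f_{\theta,\alpha,z})\to\theta$ by the geometric fact above, the centering converges to $\theta$ as $z\to\infty$, producing the claimed Gaussian limit $\phi(\theta,\|\tilde T_{f_{\theta,\alpha,z}}\|_L^2)$ after iterating the limits in $n$ and $z$.

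The main obstacle is technical rather than conceptual: Theorems~\ref{thm:2} and~\ref{thm:4.2} were stated under smoothness and boundedness hypotheses on $g_0$ (e.g., $g_0\in\mathscr C^\beta$ with $\beta>1/2$, bounded away from $0$ and $\infty$), whereas $f_{\theta,\alpha,z}$ is a mixture with a uniform atom that may fail these regularity conditions. One must verify that condition A2 and the hypotheses of Lemma~\ref{lemma:prop1} persist for $g_0 = f_{\theta,\alpha,z}$, and that the efficient influence function $\tilde T_{f_{\theta,\alpha,z}}$ given by Theorem~\ref{lem:tildeT} is well-defined and bounded uniformly in $z$. The reparameterization discussed after Theorem~\ref{thm:1} accommodates unbounded support, and a dominated convergence argument justifies the interchange of limits $n\to\infty$ and $z\to\infty$, so these checks are routine but must be made explicit.
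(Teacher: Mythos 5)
Your proposal is correct and follows the same overall architecture as the paper's proof: (i) establish that $T(f_{\theta,\alpha,z})\to\theta$ as $z\to\infty$; (ii) invoke the MHB consistency result with $f_{\theta,\alpha,z}$ playing the role of the data-generating density so that $T(g_n^*)\to T(f_{\theta,\alpha,z})$ in probability; (iii) invoke Theorem \ref{thm:4.2} with $g_0=f_{\theta,\alpha,z}$ to obtain a Gaussian limit centered at $T(f_{\theta,\alpha,z})$ with variance $\|\tilde T_{f_{\theta,\alpha,z}}\|_L^2$, which drifts to the claimed limit as $z\to\infty$. The one place you genuinely diverge is step (i): the paper simply cites Theorem 7 of \cite{Beran77}, whereas you re-derive it via support separation ($\int f_t^{1/2}\delta_z^{1/2}\,dx\to 0$) and the Cauchy--Schwarz characterization of the minimizer of the limiting objective $2-2\sqrt{1-\alpha}\int f_t^{1/2}f_\theta^{1/2}$; this is essentially Beran's own argument and is correct, provided you note that the $o(1)$ term is uniform over the compact $\Theta$ (which follows from A1 together with tightness of $\{f_t\}$), since uniform convergence of the objectives is what licenses convergence of their argmins. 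You also make explicit a point the paper's proof passes over in silence: Theorems \ref{thm:2} and \ref{thm:4.2} are proved under posterior consistency at the truth and regularity of $g_0$ (H\"{o}lder smoothness with $\beta>1/2$, boundedness away from $0$ and $\infty$), and these hypotheses must be re-verified for the contaminated density $f_{\theta,\alpha,z}$, whose uniform bump of width $2\epsilon$ at $z$ need not satisfy them. Flagging that verification as a required (if routine) step is an improvement in rigor over the published argument rather than a defect of your proposal.
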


\begin{proof} By Theorem 7 in \cite{Beran77}, for functional $T$ as we defined and under the conditions in this theorem, we have that
$$
lim_{z\to \infty} T(f_{\theta,\alpha, z})=\theta.
$$
We also have that, for MHB, under conditions of Theorem \ref{thm:2},  $lim_{n\to \infty} T(g_n^*)\to T(f_{\theta,\alpha, z})$ in probability.  Combining the two results, $lim_{z\to \infty} lim_{n\to \infty}T(g_n^*)=\theta$, when the data is generated from a contaminated distribution as $f_{\theta, \alpha, z}$. Similarly, by Theorem \ref{thm:4.2}, we have that  $\pi(T(g) \mid \mathbb X_n)\to \phi(T(f_{\theta,\alpha,a}), \|\tilde T_{f_{\theta, \alpha, z}}\|_L^2)$ in distribution as $n\to \infty$, and  which converges to  $\phi(\theta, \|\tilde T_{f_{\theta, \alpha, z}}\|_L^2)$, as $z\to \infty$.
\end{proof}

\section{Demonstration}

We provide a demonstration of both BMH and MHB methods on two data sets: the classical Newcomb light speed data  (see \cite{Stigler1977}, \cite{BSP11}) in which 2 out of 66 values are clearly negative oultiers, and a bivariate simulation containing 10\% contamination in 2 asymmetric locations.

We have implemented the BMH and MHB methods using two Bayesian nonparametric priors:
\begin{enumerate}
\item the random histogram prior studied in this paper based on a fixed $k=100$ with the range naturally extend to the range of the observed data.  This is applied only to our first univariate example.

\item  the popular Dirichlet Process (DP) kernel mixture of the form
\begin{eqnarray*}
y_i \mid  \mu_i,\Sigma_i &\sim& N (\mu_{i}, \Sigma_i)  \\
(\mu_i, \Sigma_i) \mid G &\sim& G\\
G \mid \alpha, G_0 &\sim& DP(\alpha G_0)
\end{eqnarray*}
where, the baseline distribution is the conjugate normal-inverted Wishart,
$$
G_0= N(\mu\mid m_1, (1/k_0)\Sigma)IW(\Sigma \mid \nu_1, \psi_1).
$$
Note that when $y_i$'s are univariate observations, the inverse Wishart (IW) distribution reverts to being an inverse Gamma distribution.
To complete the model specification, independent hyperpriors are assumed
\begin{eqnarray*}
\alpha \mid   a_0, b_0 \sim Gamma(a_0,b_0)\\
m_1\mid m_2, s_2 \sim N(m_2,s_2)\\
k_0 \mid \tau_1,\tau_2 \sim Gamma(\tau_1/2, \tau_2/2)\\
\psi_1\mid \nu_2,\psi_2 \sim IW(\nu_2, \psi_2).
\end{eqnarray*}
\end{enumerate}
We obtain posteriors for both using BUGS. We have elected to use BUGS here as opposed to the package \texttt{DPpackage} within R despite the latter's rather efficient MCMC algorithms because our BMH method requires direct access to samples from the posterior distribution as opposed to the expected {\em a posteriori} estimate.  The R package \texttt{distrEx} is then used to construct the sampled density functions and calculated the Hellinger distance between the sampled densities from nonparametric model and the assumed normal distribution. The R package \texttt{optimx} is used to find the minima of the Hellinger distances. The time-cost of our methods are dominated by the optimization step, rather than in obtaining these samples.

We first apply BMH and MHB on the Simon Newcomb's measurements to measure the speed of light. The data contains 66 observations. For this example, we specify the parameters and hyper-parameters of the DPM as
$\alpha=1$,  $m_2=0, s_2=1000$, $\tau_1=1, \tau_2=100$, and $\nu_2=2, \psi_2=1$. We plot the data and a bivariate contour of the BMH posterior for both the mean and variance of the assumed normal in Figure \ref{fig:lightspeed}, where despite outliers, the BvM result is readily apparent.

Table \ref{table:light} summarizes these estimates. We report the estimated mean and variance with and without the obvious outliers as well as the same quantities estimated using both MHB and BMH methods with the last of these being the expected {\em a posteriori} estimates. Quantities in parentheses given the ``natural'' standard error for each quantity: likelihood estimates correspond to standard normal theory -- dividing the estimated standard error by $\sqrt{n}$, and BMH standard errors are obtained from the posterior distribution. For MHB, we used a bootstrap and note that while the computational cost involved in estimating MHB is significantly lower than BMH when obtaining a point estimate, the standard errors require and MCMC chain for each bootstrap, significantly raising the cost of obtaining these estimates.  We observe that both prior specifications result in parameter estimates that are identical to two decimal places and very close to those obtained after removing outliers.

\begin{figure}
\begin{center}
\begin{tabular}{cc}
\includegraphics[width=0.5\textwidth]{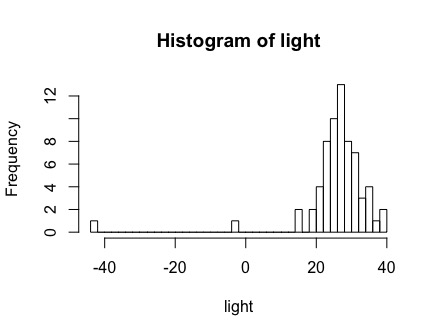} &
\includegraphics[width=0.5\textwidth]{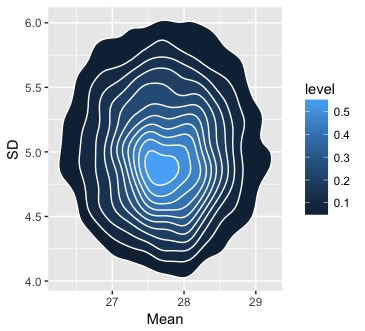}
\end{tabular}
\end{center}
	\caption{Left: Histogram of the light speed data; Right: bivariate contour plots of the posterior for the mean and variance of these data from the BMH method.} \label{fig:lightspeed}
\end{figure}

\begin{table}[htb]
\begin{center}
	\begin{tabular}{|c|c|c|c|c|}
		\hline
		& Direct Estimate&Without outliers&MHB&BMH\\\hline
		$\hat\mu$&  26.21 (1.32) &  27.75 (0.64) &  27.72  (0.64)&      27.73 (0.63) \\
		&   &        &  27.72  (0.64)&      27.73 (0.63) \\ \hline
		$\hat\sigma$& 10.75  (3.40) & 5.08  (0.46) &  5.07 (0.46) &  5.00 (0.47)\\
		& &  &  5.07 (0.46) &  5.00 (0.47)\\
		\hline
	\end{tabular}
\end{center}
\caption{Estimation results for Newcomb's light speed data. Direct Estimate refers to the standard mean and variance estimates, and ``Without outliers'' indicates the same estimates with outliers removed. The first row for each parameter gives the estimate under a Dirichlet Process prior and the second using a random histogram. Standard errors for each estimate are given in parentheses: these are from normal theory for the first two columns, via a bootstrap for MHB and from posterior samples for BMH.} \label{table:light}
\end{table}

To examine the practical implementation of methods that go beyond our theoretical results, we applied these methods to a simulated two-dimensional data set of 100 data points generated from a standard normal with two contamination distributions. Specifically, our data distribution comes from
\[
0.9 N\left( \left(\begin{array}{c} 10 \\ 5 \end{array} \right), \left( \begin{array}{cc} 3 & 1 \\ 1 & 5 \end{array} \right) \right) + 0.05 N \left( \left(\begin{array}{c} -2 \\ 5 \end{array} \right), \left( \begin{array}{cc} 0.5 & 0.1 \\ 0.1 & 0.5 \end{array} \right) \right) + 0.05 N \left( \left(\begin{array}{c} 10 \\ 14 \end{array} \right), \left( \begin{array}{cc} 0.4 & -0.1 \\ -0.1 & 0.4 \end{array} \right) \right)
\]
where exactly 5 points were generated from each of the second-two Gaussians. Our DP prior used the same hyper-parameters as above with the exception that $\Psi_1$ was obtained from the empirical variance of the (contaminated) data, and $(m_2,S_2)$ were extended to their 2-dimensional form as $\left ( (0,0)^T, diag(1000,1000)\right )$. Figure \ref{fig:2d.1} plots these data along with the posterior for the two means.  Figure \ref{fig:2d.2} provides posterior distributions for the components of the variance matrix.  Table \ref{table:2d} presents estimation results for the full data and those with the contaminating distributions removed as well as from the BMH method. Here we again observe that BMH gives results that are very close to those obtained using the uncontaminated data. There is some more irregularity in our estimates, particularly in Figure \ref{fig:2d.2} which we speculate is due to poor optimization. There is considerable scope to improve the numerics of minimum Hellinger distance methods more generally, but this is beyond the scope of this paper.

%For the 2-D example, we used a simulated data set, which contains 100 data points, 90 of them are generated from bivariate normal distribution with parameters as $\mu_{01}=10$,  $\mu_{02}=5$, $\sigma^2_{11}=3$, $\sigma^2_{12}=\sigma^2_{21}=1$, $\sigma_{22}=2$, 5 of them from $\mu_{01}=10$,  $\mu_{02}=5$, $\sigma^2_{11}=3$, $\sigma^2_{12}=\sigma^2_{21}=1$, $\sigma_{22}=2$, and the rest 5 from $\mu_{01}=10$,  $\mu_{02}=5$, $\sigma^2_{11}=3$, $\sigma^2_{12}=\sigma^2_{21}=1$, $\sigma_{22}=2$. The scatter plot of the data and the poster distribution of the mean $(\mu_01, \mu_02)$ are showed by the following graphs:

\begin{figure}[h]
	\begin{center}
		\begin{tabular}{cc}
			\includegraphics[width=0.5\textwidth]{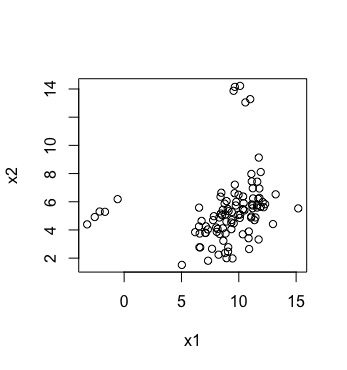}&
			\includegraphics[width=0.45\textwidth]{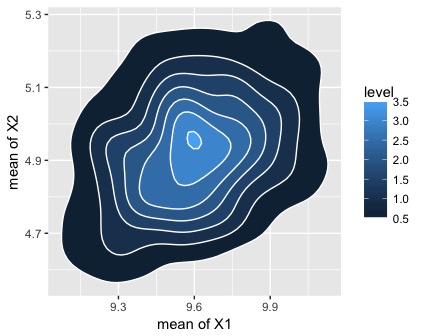}
		\end{tabular}
	\end{center}
	\caption{Left: simulated 2 dimensional normal example with two contamination components: Right: BMH posterior for the mean vector $(\mu_1,\mu_2)$.} \label{fig:2d.1}
\end{figure}

%The posterior distribution of the parameters in variance-covariance matrix is plotted by the following graphs:

\begin{figure}
	\begin{center}
		\begin{tabular}{cc}
			\includegraphics[width=0.5\textwidth]{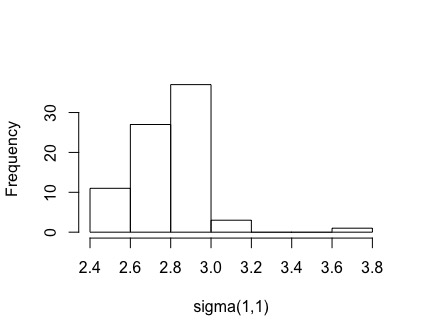}&
			\includegraphics[width=0.5\textwidth]{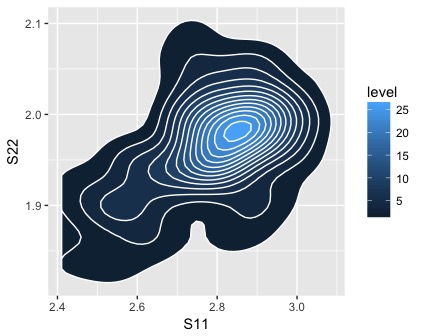}\\
			\includegraphics[width=0.5\textwidth]{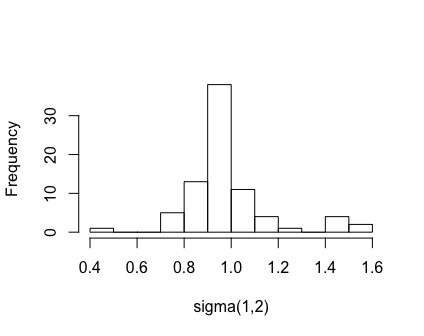}&
			\includegraphics[width=0.5\textwidth]{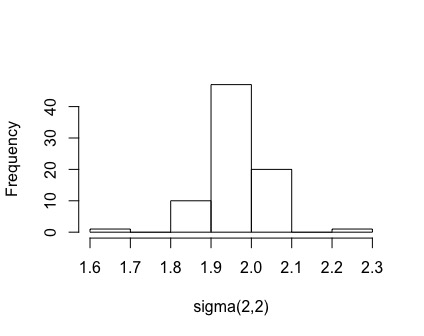}
		\end{tabular}
	\end{center}
	\caption{Posterior distributions for the elements of $\Sigma$ in the simulated bivariate normal example. } \label{fig:2d.2}
\end{figure}

%For bivariate example, since we showed the robustness, and comparison between the BMH and MHB, and the comparison between the DPM prior and random histogram prior, we present only the result from BMH method with DPM prior.

%Some comparison and summary is given in the table below:

\begin{table}[htb]
\begin{center}
	\begin{tabular}{|c|c|c|c|c|c|}
		\hline
		& $\mu_{01}$&$\mu_{02}$&$\Sigma_{11}$&$\Sigma_{12}$&$\Sigma_{22}$\\ \hline
		True&  10 &  5 &  3  &   1&2 \\  \hline
		 Contaminated data&9.07&5.36&9.76&1.67&5.80\\ \hline
	 Data with outliers removed &9.62 (0.13) &4.91 (0.11) &3.45 (0.13) &1.49 (0.13) & 2.29 (0.11)\\ \hline
		Estimated by BMH& 9.59 (0.27)   & 4.93 (0.19)       &  2.79  (0.18)  &   0.98 (0.18) & 1.97 (0.076) \\ \hline
	\end{tabular}
\end{center}
\caption{Estimation results for a contaminated bivariate normal. We provide generating estimates, the natural maximum likelihood estimates with and without outliers and the BMH estimates. Reported BMH estimates are expected {\em a posteriori} estimates with posterior standard errors given in paretheses. } \label{table:2d}
\end{table}

%For the 2-D example, we could see that although the estimated value looks not very close to the true mean, especially the first component of the mean, it is more due to the randomness of the generated data, it is very close to the sample mean of the data without contamination. The second component of the mean is smaller than the true parameter, while the outliers pull the sample mean to the larger side. It is also very close to the sample mean calculated from the uncontaminated data. The estimates for components of the variance matrix are all underestimated to some level. However, they are still in the 2 standard error range of the true values. The numbers in the parenthesis in the line for data without outliers are the bootstrap standard error of these values, which may not be needed here. Just put it here. And for the underestimation on the values in variance matrix, I believe they are more due to the optimization algorithm, although I think we have no time to correct it and show it for this submission. Even with such biasness, 1. the location estimation is still good and robust, 2. for the variance estimation in multiple dimension,  such biasness is not terrible.

\section{Discussion}

This paper investigates the use of minimum Hellinger distance methods that replace kernel density estimates with Bayesian nonparametric models. We show that simply substituting the expected {\em a posteriori} estimator will reproduce the efficiency and robustness properties of the classical disparity methods first derived in \cite{Beran77}. Further, inducing a posterior distribution on $\theta$ through the posterior for $g$ results in a Bernstein von Mises theorem and a distributional robustness result.

There are multiple potential extensions of this work.  While we have focussed on the specific pairing of Hellinger distance and random histogram priors, both of these can be generalized. A more general class of disparities was examined in \cite{ParkBasu04} and we believe the extension of our methods to this class are straightforward.  More general Bayesian nonparametric priors are discussed in \cite{Ghosh2003} where the Dirichlet process prior has been particularly popular. Extensions to each of these priors will require separate analysis (e.g. \cite{wu08}).  Extensions of disparities to regression models were examined in \cite{Hooker16} using a conditional density estimate, where equivalent Bayesian nonparametrics are less well-developed. Other modeling domains such as time series may require multivariate density estimates, resulting in further challenges.

Our results are a counterpoint to the Bayesian extensions of Hellinger distance methods in \cite{Hooker11} where the kernel density was retained for $g_n$ but a prior was given for $\theta$ and the disparity treated as a log likelihood.  Combining both these approaches represents a fully Bayesian implementation of disparity methods and is an important direction of future research.

\end{document}